\providecommand{\U}[1]{\protect \rule{.1in}{.1in}}
\newtheorem{theorem}{Theorem}
\newtheorem{corollary}{Corollary}
\newtheorem{definition}{Definition}
\newtheorem{lemma}{Lemma}
\newtheorem{remark}{Remark}
\newenvironment{proof}[1][Proof]{\noindent \textbf{#1.} }{\  \rule{0.5em}{0.5em}}
\definecolor{blue}{rgb}{0,0,1}
\begin{document}

\title{Stability Analysis of Nonlinear Time-Varying Systems by Lyapunov Functions
with Indefinite Derivatives}
\author{Bin Zhou\thanks{Center for Control Theory and Guidance Technology, Harbin
Institute of Technology, Harbin, 150001, China. Email:
\texttt{binzhoulee@163.com, binzhou@hit.edu.cn}.}}
\date{}
\maketitle

\begin{abstract}
This paper is concerned with stability analysis of nonlinear time-varying
systems by using Lyapunov function based approach. The classical Lyapunov
stability theorems are generalized in the sense that the time-derivative of
the Lyapunov functions are allowed to be indefinite. The stability analysis is
accomplished with the help of the scalar stable functions introduced in our
previous study. Both asymptotic stability and input-to-state stability are
considered. \ Particularly, for asymptotic stability, several concepts such as
uniform and non-uniform asymptotic stability, and uniform and non-uniform
exponential stability are studied. The effectiveness of the proposed theorems
is illustrated by several numerical examples.

\vspace{0.3cm}

\textbf{Keywords:} Nonlinear time-varying systems; Asymptotic stability;
Input-to-State stability; Stable functions; Lyapunov function with indefinite derivative

\end{abstract}

\section{Introduction}

It has been recognized in the literature that time-varying systems, which are
also referred to as non-autononuous systems and non-stationary systems, are
more difficult to handle than time-invariant systems
\cite{khalil02book,vid02book,zlf15auto,zhou15auto}. Let's take linear systems
for example. For linear time-invariant (LTI) system, it is known that there
are only two kinds of stability concepts, namely, Lyapunov stability and
asymptotic stability, which are totally determined by the eigenvalue set of
the system matrix (see, for example, \cite{rugh96book}). However, for linear
time-varying (LTV) system, there are more stability concepts such as
non-uniformly asymptotic stability, uniformly asymptotic stability,
non-uniformly exponential stability and uniformly exponential stability. See
our recent paper \cite{zhou14auto} for the detailed distinction among these
concepts. Moreover, differently from LTI systems, the stability of LTV systems
cannot be linked to the eigenvalue set of the system matrices directly
\cite{rugh96book,zhou14auto,ZCD13IJC}. Therefore, compared with time-invariant
systems, study on the analysis and design of time-varying systems is very
challenging and has been greatly retarded and only relatively less papers were
available in the literature (see
\cite{dqyl13auto,jtd12auto,mam15auto,zlf15auto,zhou14autoA} and the references therein).

Lyapunov indirect approach, which is also know as the Lyapunov's second
approach, is a powerful tool for stability analysis and design of control
systems \cite{khalil02book}. By this method, if a positive definite function
of the state can be found such that its time-derivative along the trajectories
of the considered system is negative definite, it is claimed that the system
is stable. Moreover, by imposing different positive definiteness assumptions
and different negative definiteness assumptions on the Lyapunov function and
its time-derivative, respectively, different stability properties of the
considered system can be deduced. Generally, for time-invariant systems, the
negative definiteness of the time-derivative of the Lyapunov function can be
relaxed as negative semi-definiteness, for which the so-called Lasalle
invariant principle can be utilized. For time-varying systems, expect for some
special cases, the Lasalle invariant principle is either not valid or
difficult to use \cite{vid02book}. Hence some researchers attempt to use the
available Lyapunov function, whose time-derivative is not strictly negative
definite, to construct a new Lyapunov function whose time-derivative is
negative definite \cite{mazenc03auto,mm05auto}.

Recently, for LTV systems we provided in \cite{zhou14auto} a new Lyapunov
function based stability analysis approach, which allows the time-derivative
of the Lyapunov function to be indefinite. The stability of the considered
system can be guaranteed if some scalar function is stable, and, different
stability properties of the scalar function together with different
assumptions on the bound of the Lyapunov functions gives different stability
outcomes of the considered system. The most advantages of this approach is
that, as we have emphasized, the time-derivative of the Lyapunov function is
neither required to be negative definite nor required to be negative
semi-definite. The idea in this approach was latterly extended to time-delay
systems in \cite{ze15auto} where both time-varying Razumikhin function and
time-varying Lyapunov-Krasovskii functional based stability theorems were built.

In this note we continue to extend the approach in \cite{zhou14auto} and
\cite{ze15auto} to the stability analysis of general nonlinear time-varying
systems, namely, we build stability theorems by using Lyapunov functions whose
time-derivative can take both positive and negative values. As done in
\cite{zhou14auto} and \cite{ze15auto}, this is achieved by introducing a
scalar stable function on the time-derivative of the Lyapunov function. Both
asymptotic stability and input-to-state (ISS) stability are considered. For
asymptotic stability, we provide two theorems. The first theorem (Theorem
\ref{th1}) can be used to claim non-uniformly asymptotic stability, uniformly
asymptotic stability, non-uniformly exponential stability, and uniformly
exponential stability, by imposing different assumptions on the scalar
function and the bounds for the Lyapunov function. The second stability
theorem (Theorem \ref{th3}) further allows a drifting term, which is
non-negative, in the time-derivative of the Lyapunov function, and the
asymptotic stability can be concluded if an additional condition is satisfied.
For ISS stability, we provide two theorems (Theorems \ref{th4} and \ref{th2})
for testing respectively the ISS stability and the integral input-to-state
(iIIS) stability. The advantages of the proposed theorems over the existing
results are pointed out and their effectiveness are also illustrated by
several numerical examples with some of which borrowed from the literature.

The remainder of this note is organized as follows. The system description and
preliminaries are given in Section \ref{sec2}. Our main results are given in
Section \ref{sec3} which contains two subsections dealing with asymptotic
stability and ISS stability, respectively. Numerical examples are given in
Section \ref{sec4}. The paper is concluded in Section \ref{sec5} and, finally,
some proofs are collected in the Appendix.

\textbf{Notation}: Throughout this note, if not specified, we let
$J=[t^{\#},\infty)$ with $t^{\#}$ being some scalar and use $\mathbb{C}%
^{1}\left(  J,\Omega \right)  $ and $\mathbb{PC}\left(  J,\Omega \right)  $ to
denote respectively the space of $\Omega$-valued continuously differentiable
functions and piecewise continuous functions defined on $J.$ We denote%
\begin{align*}
\mathcal{L}_{p}^{m}\left(  J\right)   &  =\left \{  f\left(  t\right)
:J\rightarrow \mathbf{R}^{m}\; \left \vert \left(  \int_{J}\left \vert f\left(
t\right)  \right \vert ^{p}\mathrm{d}t\right)  ^{\frac{1}{p}}<\infty \right.
\right \}  ,\\
\mathcal{L}_{\infty}^{m}\left(  J\right)   &  =\left \{  f\left(  t\right)
:J\rightarrow \mathbf{R}^{m}\; \left \vert \  \sup_{t\in J}\{ \left \vert f\left(
t\right)  \right \vert \}<\infty \right.  \right \}  ,\\
\mathcal{Z}^{m}\left(  J\right)   &  =\left \{  f\left(  t\right)
:J\rightarrow \mathbf{R}^{m}\; \left \vert \lim_{t\rightarrow \infty}\left \vert
f\left(  t\right)  \right \vert =0\right.  \right \}  ,
\end{align*}
where $p\in \lbrack1,\infty)$ is any integer and $\left \vert \cdot \right \vert $
refers to the usual Euclidean norm. Moreover, if $m=1,$ then the
$\mathcal{L}_{p}^{m}\left(  J\right)  ,\mathcal{L}_{\infty}^{m}\left(
J\right)  $ and $\mathcal{Z}^{m}\left(  J\right)  $ will be respectively
denoted by $\mathcal{L}_{p}\left(  J\right)  ,\mathcal{L}_{\infty}\left(
J\right)  $ and $\mathcal{Z}\left(  J\right)  $ for short. We also use
$\left \Vert f\right \Vert _{\left[  t_{0},t\right]  }=\sup \{ \left \vert
f\left(  t\right)  \right \vert ,t\in \left[  t_{0},t\right]  \subset J\}$ to
denote the truncation of the norm of $f$ at $t$. \ The following are
definitions of comparison functions \cite{khalil02book}.

\begin{itemize}
\item A function $\alpha \left(  t\right)  :[0,\infty)\rightarrow
\lbrack0,\infty)$ is said to be a $\mathcal{K}$ function if $\alpha \left(
0\right)  =0$ and it is non-decreasing.

\item A function $\alpha \left(  t\right)  :[0,\infty)\rightarrow
\lbrack0,\infty)$ is said to be a $\mathcal{K}_{\infty}$ function if
$\alpha \left(  0\right)  =0$, $\lim_{s\rightarrow \infty}\alpha \left(
s\right)  =0$ and it is strictly increasing.

\item A function $\alpha \left(  t\right)  :J\rightarrow(0,\infty)$ is said to
be a $\mathcal{N}$ function if it is positive valued and non-decreasing.

\item A function $\alpha \left(  t,s\right)  :[0,\infty)\times \lbrack
0,\infty)\rightarrow \lbrack0,\infty)$ is said to be a $\mathcal{KL}$ function
if $\alpha \left(  t,\cdot \right)  \in \mathcal{K}$ and $\alpha \left(
\cdot,s\right)  $ is nondecreasing with respect to $s$ and $\lim
_{s\rightarrow \infty}\alpha \left(  t,s\right)  =0.$

\item A function $\alpha \left(  t,s\right)  \in \mathcal{KL}$ is said to be a
$\mathcal{KL_{\infty}}$ function if, moreover, $\alpha \left(  t,\cdot \right)
\in \mathcal{K}_{\infty}$ $.$

\item A function $\alpha \left(  t,s\right)  :J\times \lbrack0,\infty
)\rightarrow \lbrack0,\infty)$ is said to be a $\mathcal{NK}_{\infty}$ function
if $\alpha \left(  t,\cdot \right)  \in \mathcal{N}$ and $\alpha \left(
\cdot,s\right)  \in \mathcal{K}_{\infty}.$
\end{itemize}

\section{\label{sec2}System Description and Preliminaries}

Consider the following nonlinear time-varying system%
\begin{equation}
\dot{x}\left(  t\right)  =f\left(  t,x\left(  t\right)  ,u\left(  t\right)
\right)  , \label{sys}%
\end{equation}
where $f:J\times \mathbf{R}^{n}\times \mathbf{R}^{m}\rightarrow \mathbf{R}^{n}$
is continuous, locally Lipschitz on $x$ for bounded $u$ and such that
$f\left(  t,0,0\right)  =0$. The input $u:J\rightarrow \mathbf{R}^{m}$ is
assumed to be locally essentially bounded. In this note, we are interested in
the stability analysis of this class of systems. Throughout this note, for any
$\mathbb{C}^{1}$ function $V:J\times \mathbf{R}^{n}\rightarrow \mathbf{R},$ we
denote%
\begin{equation}
\left.  \dot{V}\left(  t,x\right)  \right \vert _{(\ref{sys})}\triangleq
\frac{\partial V\left(  t,x\right)  }{\partial t}+\frac{\partial V\left(
t,x\right)  }{\partial x}f\left(  t,x,u\right)  . \label{eq6}%
\end{equation}

Next we introduce the concept of stable functions proposed in
\cite{zhou14auto}. Consider the following scalar linear time-varying (LTV)
system%
\begin{equation}
\dot{y}(t)=\mu(t)y(t),\;t\in J, \label{scalar}%
\end{equation}
where $y(t):J\rightarrow \mathbf{R}$ is the state variable and$\  \mu
(t)\in \mathbb{PC}\left(  J,\mathbf{R}\right)  $. It is not hard to see that
the state transition matrix for system (\ref{scalar}) is given by%
\begin{equation}
\phi \left(  t,t_{0}\right)  =\exp \left(  \int_{t_{0}}^{t}\mu \left(  s\right)
\mathrm{d}s\right)  ,\; \forall t\geq t_{0}\in J. \label{st}%
\end{equation}

\begin{definition}
\label{df1}\cite{zhou14auto} The function $\mu(t)\in \mathbb{PC}\left(
J,\mathbf{R}\right)  $ is said to be \vspace{-0.3cm}

\begin{enumerate}
\item asymptotically stable if the scalar LTV system (\ref{scalar}) is
asymptotically stable;

\item exponentially stable if the scalar LTV system (\ref{scalar}) is
exponentially stable, namely, there exist constants $k\left(  t_{0}\right)
>0$ and $\alpha>0$ such that
\begin{equation}
\left \vert y\left(  t\right)  \right \vert \leq k\left(  t_{0}\right)
\left \vert y\left(  t_{0}\right)  \right \vert \exp \left(  -\alpha \left(
t-t_{0}\right)  \right)  ,\forall t\geq t_{0}\in J. \label{eq10}%
\end{equation}

\item uniformly exponentially stable (or uniformly asymptotically stable) if
the scalar LTV system (\ref{scalar}) is uniformly exponentially stable,
namely, the constant $k\left(  t_{0}\right)  $ in (\ref{eq10}) is independent
of $t_{0}$.
\end{enumerate}
\end{definition}

In the above definition we have noticed that, for linear system, uniformly
asymptotic stability and uniformly exponential stability are equivalent (see,
for example, \cite{rugh96book}). By noting the transition matrix (\ref{st}),
we can obtain immediately the following fact.

\begin{lemma}
\label{lm0}\cite{zhou14auto} The scalar function $\mu(t)\in \mathbb{PC}\left(
J,\mathbf{R}\right) $ is

\begin{enumerate}
\item asymptotically stable if and only if
\begin{equation}
\lim_{t\rightarrow \infty}\int_{t_{0}}^{t}\mu(s)\mathrm{d}s=-\infty.
\label{eq17}%
\end{equation}

\item exponentially stable if and only if there exist $\beta(t_{0})\geq0$ and
$\alpha>0$ such that%
\begin{equation}
\text{ }\int_{t_{0}}^{t}\mu(s)\mathrm{d}s\leq-\alpha(t-t_{0})+\beta
(t_{0}),\  \forall t\geq t_{0}\in J. \label{eqadd2}%
\end{equation}

\item uniformly exponentially stable if and only if (\ref{eqadd2}) is
satisfied where $\beta$ is independent of $t_{0}.$
\end{enumerate}
\end{lemma}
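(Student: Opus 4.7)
The plan is to exploit the explicit formula for the state transition function given in (\ref{st}). Since every solution of (\ref{scalar}) satisfies
\begin{equation*}
y(t) = \phi(t,t_0)\,y(t_0) = \exp\!\left(\int_{t_0}^{t}\mu(s)\,\mathrm{d}s\right) y(t_0),
\end{equation*}
the magnitude $|y(t)|$ is completely determined by $\phi(t,t_0)$ and $|y(t_0)|$. Therefore each of the three stability notions in Definition \ref{df1} translates directly into a statement about $\phi(t,t_0)$, which, after taking logarithms, becomes a statement about $\int_{t_0}^{t}\mu(s)\,\mathrm{d}s$.

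For item (1), I would argue that asymptotic stability of (\ref{scalar}) means $y(t)\to 0$ as $t\to\infty$ for every $t_0\in J$ and every initial condition $y(t_0)$. Picking $y(t_0)\neq 0$, this is equivalent to $\phi(t,t_0)\to 0$, which, by continuity and monotonicity of the exponential, is equivalent to $\int_{t_0}^{t}\mu(s)\,\mathrm{d}s\to -\infty$, i.e. (\ref{eq17}).

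For item (2), exponential stability as phrased in (\ref{eq10}) is equivalent to $\phi(t,t_0)\le k(t_0)\exp(-\alpha(t-t_0))$; taking logarithms gives
\begin{equation*}
\int_{t_0}^{t}\mu(s)\,\mathrm{d}s \le \ln k(t_0) - \alpha(t-t_0),
\end{equation*}
so setting $\beta(t_0)=\max\{\ln k(t_0),0\}\ge 0$ yields (\ref{eqadd2}); the converse is obtained by exponentiating and setting $k(t_0)=\mathrm{e}^{\beta(t_0)}$. Item (3) follows from item (2) by observing that ``$k(t_0)$ independent of $t_0$'' is equivalent, through the same transformation $\beta(t_0)=\ln k(t_0)$ (adjusted to be nonnegative), to ``$\beta$ independent of $t_0$''.

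I do not expect any real obstacle here; the proof is essentially just bookkeeping of the correspondence between $\phi(t,t_0)$ and the integral $\int_{t_0}^t\mu(s)\,\mathrm{d}s$ under $\ln$. The only mild subtlety is maintaining the sign requirement $\beta(t_0)\ge 0$ in (\ref{eqadd2}), which is handled by enlarging $\beta(t_0)$ if necessary (this does not affect the inequality since it only weakens the upper bound). Likewise, in item (1) one should note that the asymptotic stability condition $y(t)\to 0$ must hold for all $t_0$, hence (\ref{eq17}) is required for every $t_0\in J$; however, because $\int_{t_0}^{t}\mu(s)\,\mathrm{d}s = \int_{t^{\#}}^{t}\mu(s)\,\mathrm{d}s - \int_{t^{\#}}^{t_0}\mu(s)\,\mathrm{d}s$, validity for one $t_0$ automatically implies validity for every $t_0\in J$, which also implicitly handles the uniformity statements.
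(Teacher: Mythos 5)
Your proposal is correct and follows exactly the route the paper intends: Lemma \ref{lm0} is stated as an immediate consequence of the explicit transition matrix $\phi(t,t_0)=\exp\left(\int_{t_0}^{t}\mu(s)\,\mathrm{d}s\right)$ in (\ref{st}), and your logarithmic bookkeeping (including the adjustment $\beta(t_0)=\max\{\ln k(t_0),0\}$ and the observation that (\ref{eq17}) for one $t_0$ implies it for all) is precisely that argument spelled out.
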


Of course, if $\mu(t)\in \mathbb{PC}\left(  J,\mathbf{R}\right)  $ is a
periodic function with period $T$, then it is easy to see that the three
different stability concepts in Definition \ref{df1} are equivalent, and
moreover they are equivalent to the existence of $c>0$ such that
\cite{zhou14auto}
\begin{equation}
\int_{t}^{t+T}\mu(s)\mathrm{d}s\leq-c. \label{eqadd8}%
\end{equation}

\section{\label{sec3}Main Results}

\subsection{Asymptotic Stability Analysis}

We first give the following definition.

\begin{definition}
\label{df2}The nonlinear time-varying system (\ref{sys}) is said to be

\begin{enumerate}
\item globally asymptotically stable if, for any $\varepsilon>0,$ there is
$\delta \left(  t_{0},\varepsilon \right)  >0$ such that $\left \vert x\left(
t_{0}\right)  \right \vert \leq \delta \left(  t_{0},\varepsilon \right)
\Rightarrow \left \vert x\left(  t\right)  \right \vert \leq \varepsilon,\forall
t\geq t_{0}\in J$ (Lyapunov stability), and, for any $x\left(  t_{0}\right)
\in \mathbf{R}^{n},$ there holds $\lim_{t\rightarrow \infty}\left \vert x\left(
t\right)  \right \vert =0$ (attractivity) \cite{khalil02book}, or equivalently,
there exists a $\sigma \in \mathcal{KL}$ and a $\theta \in \mathcal{N}$ such that,
for any $x\left(  t_{0}\right)  \in \mathbf{R}^{n}$ (Proposition 2.5 in
\cite{kt03siam}),
\[
\left \vert x\left(  t\right)  \right \vert \leq \sigma \left(  \theta \left(
t_{0}\right)  \left \vert x\left(  t_{0}\right)  \right \vert ,t-t_{0}\right)
,\  \forall t\geq t_{0}\in J.
\]

\item globally uniformly asymptotically stable if there exists a $\sigma
\in \mathcal{KL}$ such that, for any $x\left(  t_{0}\right)  \in \mathbf{R}^{n}$
\cite{khalil02book},
\[
\left \vert x\left(  t\right)  \right \vert \leq \sigma \left(  \left \vert
x\left(  t_{0}\right)  \right \vert ,t-t_{0}\right)  ,\  \forall t\geq t_{0}\in
J.
\]

\item globally exponentially stable if there is a $\theta \in \mathcal{N}$ and
$\alpha>0$ such that \cite{zhou14auto}
\begin{equation}
\left \vert x\left(  t\right)  \right \vert \leq \theta \left(  t_{0}\right)
\left \vert x\left(  t_{0}\right)  \right \vert \exp \left(  -\alpha \left(
t-t_{0}\right)  \right)  ,\; \forall t\geq t_{0}\in J. \label{eq12}%
\end{equation}

\item globally uniformly exponentially stable if (\ref{eq12}) is satisfied
with $\theta \left(  t_{0}\right)  $ independent of $t_{0}$ \cite{khalil02book}.
\end{enumerate}
\end{definition}

To the best of our knowledge, in the literature the exponential stability
refers to uniformly exponential stability, while the non-uniformly exponential
stability concept in Item 3 is not well recognized, and was only emphasized
recently in our work \cite{zhou15auto} for LTV systems.

\begin{theorem}
\label{th1}Assume that there exists a $\mathbf{C}^{1}$ function $V:J\times
\mathbf{R}^{n}\rightarrow \lbrack0,\infty)$, two $\mathcal{NK}_{\infty}$
functions $\alpha_{i},i=1,2,$ and a scalar function $\mu \left(  t\right)
\in \mathbb{PC}\left(  J,\mathbf{R}\right)  $ such that, for all $t\in J$ and
$x\in \mathbf{R}^{n},$%
\begin{align}
\alpha_{1}\left(  t,\left \vert x\right \vert \right)   &  \leq V\left(
t,x\right)  \leq \alpha_{2}\left(  t,\left \vert x\right \vert \right)
,\label{eq20}\\
\left.  \dot{V}\left(  t,x\right)  \right \vert _{(\ref{sys})\text{ where
}u\equiv0}  &  \leq \mu \left(  t\right)  V\left(  t,x\right)  , \label{eq21}%
\end{align}
are satisfied. Then the nonlinear system (\ref{sys}) with $u\equiv0$ is

\begin{enumerate}
\item globally asymptotically stable if $\mu \left(  t\right)  $ is
asymptotically stable.

\item globally uniformly asymptotically stable if $\mu \left(  t\right)  $ is
uniformly exponentially stable and $\alpha_{i}\left(  t,s\right)  ,i=1,2$ are
independent of $t.$

\item globally exponentially stable if $\mu \left(  t\right)  $ is
exponentially stable and there exist $m>0$ and $k_{i}\left(  \cdot \right)
\in \mathcal{N},i=1,2$ such that $\alpha_{i}\left(  t,s\right)  =k_{i}\left(
t\right)  s^{m},i=1,2.$

\item globally uniformly exponentially stable if $\mu \left(  t\right)  $ is
uniformly exponentially stable and there exist $m>0,k_{i}>,i=1,2$ such that
$\alpha_{i}\left(  t,s\right)  =k_{i}s^{m},i=1,2.$
\end{enumerate}
\end{theorem}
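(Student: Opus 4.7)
The plan is to reduce everything to the scalar comparison inequality (\ref{scalar}) and then invoke Lemma \ref{lm0}. Along any trajectory of (\ref{sys}) with $u \equiv 0$, inequality (\ref{eq21}) gives the scalar differential inequality $\dot{W}(t) \leq \mu(t) W(t)$ for $W(t) \triangleq V(t, x(t))$. Using the integrating factor $\exp(-\int_{t_0}^{t}\mu(s)\,\mathrm{d}s)$ and the standard comparison lemma, I would obtain
\[
V(t, x(t)) \leq V(t_0, x(t_0)) \exp\!\left(\int_{t_0}^{t} \mu(s)\, \mathrm{d}s\right), \quad \forall t \geq t_0 \in J.
\]
Combining this with (\ref{eq20}) and the fact that $\alpha_1(t, r)$ is non-decreasing in $t$ (so $\alpha_1(t, r) \geq \alpha_1(t_0, r)$ for $t \geq t_0$), I would derive the master bound
\[
|x(t)| \leq \alpha_{1}^{-1}\!\left(t_0,\; \alpha_2(t_0, |x(t_0)|)\exp\!\left(\int_{t_0}^{t} \mu(s)\, \mathrm{d}s\right)\right),
\]
where $\alpha_1^{-1}(t_0, \cdot) \in \mathcal{K}_\infty$ denotes the inverse of $\alpha_1(t_0, \cdot)$ in its second argument. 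Each of the four cases then reduces to feeding the appropriate characterization of $\mu$ from Lemma \ref{lm0} into this master bound.

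For Item 1, Lemma \ref{lm0} yields $\int_{t_0}^{t}\mu \to -\infty$ as $t \to \infty$, so the exponential factor tends to $0$ and is bounded above by some $M(t_0) < \infty$ on $[t_0, \infty)$. Continuity of $\alpha_1^{-1}(t_0, \cdot)$ at $0$ together with $\alpha_1^{-1}(t_0, 0) = 0$ yields attractivity $|x(t)| \to 0$, while the upper bound $M(t_0)$ combined with the $\mathcal{K}_\infty$ property of $\alpha_2(t_0, \cdot)$ yields Lyapunov stability by choosing $\delta(t_0, \varepsilon)$ so that $\alpha_1^{-1}(t_0, \alpha_2(t_0, \delta) M(t_0)) \leq \varepsilon$; the $\mathcal{KL}$-$\mathcal{N}$ characterization cited in Definition \ref{df2} then closes the item. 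For Item 2, both $\alpha_i$ lose their $t$-dependence and Lemma \ref{lm0} gives $\int_{t_0}^{t}\mu(s)\,\mathrm{d}s \leq \beta - \alpha(t-t_0)$ with $\beta$ independent of $t_0$, so the master bound takes the form $|x(t)| \leq \sigma(|x(t_0)|, t-t_0)$ with $\sigma(r,\tau) = \alpha_1^{-1}(\alpha_2(r) e^{\beta - \alpha \tau}) \in \mathcal{KL}$.

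For Items 3 and 4, substituting $\alpha_i(t,s) = k_i(t)s^m$ (respectively $\alpha_i(t,s) = k_i s^m$) into the $V$-bound and solving explicitly for $|x(t)|$ yields
\[
|x(t)| \leq \left(\frac{k_2(t_0)}{k_1(t_0)}\right)^{1/m} e^{\beta(t_0)/m}\, |x(t_0)|\, e^{-(\alpha/m)(t-t_0)},
\]
which matches (\ref{eq12}) with decay rate $\alpha/m$. In Item 4, all of $k_1$, $k_2$, $\beta$ are constants, so the prefactor is a constant $\theta$. The main obstacle is Item 3, where I must verify that the prefactor lies in $\mathcal{N}$: although $k_2(t_0) e^{\beta(t_0)}$ is non-decreasing in $t_0$, the factor $1/k_1(t_0)$ is non-increasing, so the raw prefactor need not be monotone. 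I would repair this by either replacing $1/k_1(t_0)$ with its uniform upper bound $1/k_1(t^{\#})$ or by redefining $\theta$ as the running supremum of the prefactor; either modification preserves the exponential decay rate. Once the master bound is established, the remaining steps are essentially routine.
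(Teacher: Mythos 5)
Your proof follows essentially the same route as the paper's: the same master bound $\alpha_1(t_0,|x(t)|)\le\alpha_2(t_0,|x(t_0)|)\phi(t,t_0)$ obtained from the scalar comparison inequality, followed by the same four specializations via Lemma \ref{lm0} (your $M(t_0)$ plays the role of the paper's $\gamma(t_0)$ in Item 1, and Items 2--4 are computed identically). The only substantive difference is that you explicitly verify that the prefactor in Item 3 can be taken in $\mathcal{N}$ --- a point the paper's proof passes over silently --- and your proposed repair is sound.
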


\begin{proof}
Notice that (\ref{eq21}) implies $\frac{\mathrm{d}}{\mathrm{d}t}\ln V\left(
t,x\right)  =\frac{\dot{V}\left(  t,x\right)  }{V\left(  t,x\right)  }\leq
\mu \left(  t\right)  ,\forall t\geq t_{0}\in J,$ from which it follows that%
\begin{align}
\alpha_{1}\left(  t_{0},\left \vert x\left(  t\right)  \right \vert \right)   &
\leq \alpha_{1}\left(  t,\left \vert x\left(  t\right)  \right \vert \right)
\nonumber \\
&  \leq V\left(  t,x\left(  t\right)  \right)  )\nonumber \\
&  \leq V\left(  t_{0},x\left(  t_{0}\right)  \right)  \phi \left(
t,t_{0}\right) \nonumber \\
&  \leq \alpha_{2}\left(  t_{0},\left \vert x\left(  t_{0}\right)  \right \vert
\right)  \phi \left(  t,t_{0}\right)  . \label{zbzb1}%
\end{align}

\textit{Proof of Item 1}: \ Since $\lim_{t\rightarrow \infty}\phi \left(
t,t_{0}\right)  =0,$ there exists a $T=T\left(  t_{0}\right)  $ such that
$\phi \left(  t,t_{0}\right)  \leq1,t\geq t_{0}+T\left(  t_{0}\right)  .$ Let%
\[
\gamma \left(  t_{0}\right)  \triangleq \max_{s\in \left[  t_{0},t_{0}+T\left(
t_{0}\right)  \right]  }\left \{  \phi \left(  t,t_{0}\right)  \right \}  \geq1.
\]
Then we have from (\ref{zbzb1}) that%
\begin{equation}
\alpha_{1}(t_{0},\left \vert x\left(  t\right)  \right \vert )\leq \alpha
_{2}\left(  t_{0},\left \vert x\left(  t_{0}\right)  \right \vert \right)
\gamma \left(  t_{0}\right)  ,\; \forall t\geq t_{0}. \label{zbzb2}%
\end{equation}
Hereafter, for a function $\alpha \in \mathcal{NK}_{\infty},$ we use
$\alpha^{-1}(t,s)$ denotes the inverse function of $\alpha(t,s)$ with respect
to the second variable, namely, $\alpha^{-1}\left(  t,\alpha \left(
t,s\right)  \right)  \equiv1$. Now we set $\delta \left(  t_{0}\right)
=\alpha_{2}^{-1}(t_{0},\frac{1}{\gamma \left(  t_{0}\right)  }\alpha_{1}%
(t_{0},\varepsilon))$, or equivalently, $\alpha_{2}\left(  t_{0},\delta \left(
t_{0}\right)  \right)  \gamma \left(  t_{0}\right)  =\alpha_{1}(t_{0}%
,\varepsilon).$ Here Then it follows from (\ref{zbzb2}) that, for any
$\left \vert x\left(  t_{0}\right)  \right \vert \leq \delta \left(  t_{0}\right)
,$
\begin{align*}
\alpha_{1}(t_{0},\left \vert x\left(  t\right)  \right \vert )  &  \leq
\alpha_{2}\left(  t_{0},\left \vert x\left(  t_{0}\right)  \right \vert \right)
\gamma \left(  t_{0}\right) \\
&  \leq \alpha_{2}\left(  t_{0},\delta \left(  t_{0}\right)  \right)
\gamma \left(  t_{0}\right) \\
&  =\alpha_{1}(t_{0},\varepsilon),\; \forall t\geq t_{0}\in J,
\end{align*}
which is just $\left \vert x\left(  t\right)  \right \vert \leq \varepsilon
,\forall t\geq t_{0}.$ On the other hand, it follows from (\ref{eq17}) and
(\ref{zbzb1}) that $\lim_{t\rightarrow \infty}\left \vert x\left(  t\right)
\right \vert =0.$ This proves that the system is globally asymptotically stable.

\textit{Proof of Item 2}: It follows from Item 3 of Lemma \ref{lm0} and
(\ref{zbzb1}) that, for any $t\geq t_{0}\in J,$
\begin{align*}
\left \vert x\left(  t\right)  \right \vert  &  \leq \alpha_{1}^{-1}\left(
\alpha_{2}\left(  \left \vert x\left(  t_{0}\right)  \right \vert \right)
\phi \left(  t,t_{0}\right)  \right) \\
&  \leq \alpha_{1}^{-1}\left(  \alpha_{2}\left(  \left \vert x\left(
t_{0}\right)  \right \vert \right)  \exp \left(  \beta \right)  \exp \left(
-\alpha \left(  t-t_{0}\right)  \right)  \right)  \in \mathcal{KL},
\end{align*}
which shows that the system is globally uniformly asymptotically stable.

\textit{Proof of Item 3}: By noting that $\alpha_{1}^{-1}\left(  t,s\right)
=s^{\frac{1}{m}}k_{i}^{-\frac{1}{m}}\left(  t\right)  $ and (\ref{eqadd2}), we
obtain from (\ref{zbzb1}) the following%
\begin{align}
\left \vert x\left(  t\right)  \right \vert  &  \leq k_{1}^{-\frac{1}{m}}\left(
t_{0}\right)  \left(  k_{2}\left(  t_{0}\right)  \left \vert x\left(
t_{0}\right)  \right \vert ^{m}\exp \left(  -\alpha \left(  t-t_{0}\right)
+\beta \left(  t_{0}\right)  \right)  \right)  ^{\frac{1}{m}}\nonumber \\
&  =\left(  \frac{k_{2}\left(  t_{0}\right)  }{k_{1}\left(  t_{0}\right)
}\right)  ^{\frac{1}{m}}\exp \left(  \frac{\beta \left(  t_{0}\right)  }%
{m}\right)  \left \vert x\left(  t_{0}\right)  \right \vert \exp \left(
-\frac{\alpha}{m}\left(  t-t_{0}\right)  \right)  , \label{eq23}%
\end{align}
which indicates that the system is globally exponentially stable.

\textit{Proof of Item 4}: This follows from (\ref{eq23}) since $k_{i},i=1,2$
and $\beta \left(  t_{0}\right)  $ are independent of $t_{0}.$ The proof is finished.
\end{proof}

To go further, we introduce the following technical lemma whose proof is
provided in Appendix A1.

\begin{lemma}
\label{lm2}(Generalized Gronwall-Bellman Inequality) Assume that $\mu \left(
t\right)  ,\pi \left(  t\right)  \in \mathbb{PC}\left(  J,\mathbf{R}\right)  $
and $y\left(  t\right)  :J\rightarrow \lbrack0,\infty)$ be such that%
\begin{equation}
\dot{y}\left(  t\right)  \leq \mu \left(  t\right)  y\left(  t\right)
+\pi \left(  t\right)  ,\;t\in J. \label{eq90}%
\end{equation}
Then, for any $t\geq s\in J,$ the following inequality holds true%
\begin{equation}
y\left(  t\right)  \leq y\left(  s\right)  \phi \left(  t,s\right)  +\int
_{s}^{t}\phi \left(  t,\lambda \right)  \pi \left(  \lambda \right)
\mathrm{d}\lambda. \label{eqtemp3}%
\end{equation}

\end{lemma}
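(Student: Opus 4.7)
The plan is to use the classical integrating factor trick, adapted to the piecewise continuous setting. Define the auxiliary function
\[
z(t) \triangleq y(t)\,\phi(s,t), \qquad \text{where } \phi(s,t)=\exp\!\Bigl(-\!\int_{s}^{t}\mu(\tau)\,\mathrm{d}\tau\Bigr)=\phi(t,s)^{-1}.
\]
Since $\mu\in\mathbb{PC}(J,\mathbf{R})$, the exponential $\phi(s,\cdot)$ is absolutely continuous, and so is $z$ under the standing assumption that $y$ satisfies the differential inequality \eqref{eq90} (in the a.e.\ sense where the derivative exists).

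Next, I would differentiate $z$ using the product rule. Noting that $\tfrac{\mathrm{d}}{\mathrm{d}t}\phi(s,t) = -\mu(t)\,\phi(s,t)$, one obtains
\[
\dot{z}(t) = \dot{y}(t)\,\phi(s,t) - \mu(t)\,y(t)\,\phi(s,t)
\le \bigl(\mu(t)y(t)+\pi(t)\bigr)\phi(s,t) - \mu(t)y(t)\,\phi(s,t) = \pi(t)\,\phi(s,t),
\]
where the inequality is precisely \eqref{eq90} and relies crucially on $\phi(s,t)>0$ so that the inequality direction is preserved.

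Then I integrate both sides from $s$ to $t$. Since $z(s)=y(s)$, this gives
\[
y(t)\,\phi(s,t) - y(s) \le \int_{s}^{t}\phi(s,\lambda)\,\pi(\lambda)\,\mathrm{d}\lambda.
\]
Multiplying by $\phi(t,s)>0$ on both sides and invoking the semigroup identity $\phi(t,s)\,\phi(s,\lambda)=\phi(t,\lambda)$ (immediate from \eqref{st} and additivity of the integral in the exponent) yields the desired bound \eqref{eqtemp3}.

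No serious obstacle is anticipated. The only delicate point is the regularity of $y$: since $\mu$ and $\pi$ are merely piecewise continuous, $y$ is absolutely continuous, and the product rule and the fundamental theorem of calculus apply in the Lebesgue sense. This is standard and I would mention it in passing rather than belabor it. Alternatively, one could split $J$ into the countably many subintervals on which $\mu$ and $\pi$ are continuous, apply the classical smooth Gronwall argument on each, and patch by continuity of $y$ and $\phi$ across junction points.
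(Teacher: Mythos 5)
Your proposal is correct and is essentially the paper's own argument (Appendix A1): both multiply by the integrating factor $\exp\left(-\int_{s}^{\lambda}\mu\left(\omega\right)\mathrm{d}\omega\right)$, observe that its positivity preserves the inequality under differentiation, integrate from $s$ to $t$, and then multiply back by the positive exponential to obtain (\ref{eqtemp3}). Your added remark on absolute continuity in the piecewise-continuous setting is a minor refinement the paper passes over silently.
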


The above lemma can be regarded as the Gronwall-Bellman inequality in the
differential form. Notice that differently from the generalized integral
Gronwall-Bellman inequality in \cite{ZCD13IJC}, the function $\mu \left(
t\right)  $ is not required to be positive for all $t.$ \ We then can state
the following theorem which allows a non-negative drifting term in the
time-derivatives of Lyapunov functions.

\begin{theorem}
\label{th3}Assume that there exists a $\mathbf{C}^{1}$ function $V:J\times
\mathbf{R}^{n}\rightarrow \lbrack0,\infty)$, two $\mathcal{NK}_{\infty}$
functions $\alpha_{i},i=1,2,$ an asymptotically stable function $\mu \left(
t\right)  \in \mathbb{PC}\left(  J,\mathbf{R}\right)  $, and a scalar function
$\pi \left(  t\right)  \in \mathbb{PC}\left(  J,[0,\infty)\right)  $ such that,
for all $\left(  t,x\right)  \in J\times \mathbf{R}^{n},$ (\ref{eq20}) and the
following inequality%
\begin{equation}
\left.  \dot{V}\left(  t,x\right)  \right \vert _{(\ref{sys})\text{ where
}u\equiv0}\leq \mu \left(  t\right)  V\left(  t,x\right)  +\pi \left(  t\right)
, \label{eq91}%
\end{equation}
are satisfied. Let $\phi \left(  t,s\right)  $ be defined in (\ref{st}) and
denote $\kappa \left(  t,t_{0}\right)  :J\times J\rightarrow \mathbf{R}$ as
\begin{equation}
\kappa \left(  t,t_{0}\right)  =\int_{t_{0}}^{t}\phi \left(  t,s\right)
\pi \left(  s\right)  \mathrm{d}s. \label{eqk}%
\end{equation}
Then the nonlinear time-varying system (\ref{sys}) is globally asymptotically
stable if $\kappa \left(  t,t_{0}\right)  $ is bounded for any $t\geq t_{0}\in
J$ and
\begin{equation}
\lim_{t\rightarrow \infty}\kappa \left(  t,t_{0}\right)  =\lim_{t\rightarrow
\infty}\int_{t_{0}}^{t}\phi \left(  t,s\right)  \pi \left(  s\right)
\mathrm{d}s=0,\  \forall t_{0}\in J. \label{eq93}%
\end{equation}

\end{theorem}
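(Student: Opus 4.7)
My plan is to apply the generalized Gronwall--Bellman inequality of Lemma \ref{lm2} to the differential inequality (\ref{eq91}), then combine the result with the sandwich bound (\ref{eq20}), and finally handle attractivity and Lyapunov stability as two separate arguments.

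Applied to (\ref{eq91}) along a solution of (\ref{sys}) with $u\equiv 0$, Lemma \ref{lm2} yields $V(t,x(t))\le V(t_0,x(t_0))\phi(t,t_0)+\kappa(t,t_0)$. Sandwiching with (\ref{eq20}) and using monotonicity of $\alpha_1(\cdot,s)$ in its first argument to replace $\alpha_1(t,\cdot)$ on the left by $\alpha_1(t_0,\cdot)$ produces the master estimate
\[
\alpha_1(t_0,|x(t)|)\;\le\;\alpha_2(t_0,|x(t_0)|)\,\phi(t,t_0)+\kappa(t,t_0),\qquad \forall\,t\ge t_0\in J,
\]
which drives both halves of the argument.

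For attractivity, Lemma \ref{lm0} tells me $\phi(t,t_0)\to 0$ because $\mu$ is asymptotically stable; combined with the hypothesis $\kappa(t,t_0)\to 0$, the right-hand side of the master estimate vanishes as $t\to\infty$, and since $\alpha_1(t_0,\cdot)\in\mathcal{K}_\infty$ is continuous and zero only at the origin, $|x(t)|\to 0$ follows.

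For Lyapunov stability, the naive strategy used in the proof of Theorem \ref{th1} fails, because the additive term $\kappa(t,t_0)$ does not shrink with $|x(t_0)|$ and may exceed $\alpha_1(t_0,\varepsilon)$ for small $\varepsilon$. I therefore split $[t_0,\infty)$ into a compact initial segment and a tail. Given $\varepsilon>0$, I pick $T=T(t_0,\varepsilon)$ so that $\phi(t,t_0)\le 1$ and $\kappa(t,t_0)\le\alpha_1(t_0,\varepsilon)/2$ for all $t\ge t_0+T$; the master estimate then reads $\alpha_1(t_0,|x(t)|)\le\alpha_2(t_0,|x(t_0)|)+\alpha_1(t_0,\varepsilon)/2$ on the tail, so taking $\delta_1:=\alpha_2^{-1}(t_0,\alpha_1(t_0,\varepsilon)/2)$ enforces $|x(t)|\le\varepsilon$ there. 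On the compact segment $[t_0,t_0+T]$, continuous dependence of solutions on initial data---available because $f$ is locally Lipschitz in $x$ and $x\equiv 0$ is the solution through $x(t_0)=0$---supplies $\delta_2>0$ such that $|x(t_0)|\le\delta_2$ implies $|x(t)|\le\varepsilon$ throughout the segment. Choosing $\delta(t_0,\varepsilon):=\min(\delta_1,\delta_2)$ finishes the Lyapunov-stability step, and together with attractivity this gives global asymptotic stability in the sense of Definition \ref{df2}. The main obstacle I expect is precisely this two-interval business: in Theorem \ref{th1} everything could be absorbed by shrinking $\delta$, but the presence of the drift $\kappa$ forces the ``eventual bound plus continuous dependence'' split and an explicit invocation of the local Lipschitz hypothesis on $f$.
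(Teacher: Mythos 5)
Your master estimate and your attractivity argument coincide with the paper's: both apply Lemma \ref{lm2} to (\ref{eq91}), sandwich with (\ref{eq20}) using monotonicity of $\alpha_1(\cdot,s)$, and let $\phi(t,t_0)\rightarrow 0$ (Lemma \ref{lm0}) together with (\ref{eq93}) kill the right-hand side. Where you genuinely diverge is the Lyapunov-stability half. The paper observes that boundedness of $\phi$ and $\kappa$ makes $|x(t)|$ bounded (Lagrange stability) and then invokes Proposition 2.5 of the Karafyllis--Tsinias reference to upgrade attractivity plus boundedness to Lyapunov stability; it does not construct $\delta(t_0,\varepsilon)$ at all. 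You instead give a direct, self-contained $\varepsilon$--$\delta$ proof: you correctly identify that the additive drift $\kappa(t,t_0)$ breaks the shrink-$\delta$ trick of Theorem \ref{th1}, split off a tail $t\ge t_0+T$ on which $\phi\le 1$ and $\kappa\le\alpha_1(t_0,\varepsilon)/2$ so that $\delta_1=\alpha_2^{-1}(t_0,\alpha_1(t_0,\varepsilon)/2)$ works, and cover the compact transient $[t_0,t_0+T]$ by continuous dependence on initial data about the zero solution, which is where the local Lipschitz hypothesis on $f$ earns its keep. Your route is longer but elementary and avoids outsourcing the stability step to a converse-Lyapunov-type proposition (a step that genuinely needs that external result, since attractivity plus boundedness alone does not imply Lyapunov stability in general); the paper's route is shorter at the cost of that citation. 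Both arguments are sound, and both share the same (implicit) treatment of global existence via the a priori bound from the master estimate.
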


\begin{proof}
Applying Lemma \ref{lm2} on inequality (\ref{eq91}) gives, for all $t\geq
t_{0}\in J,$%
\begin{align*}
\alpha_{1}\left(  t_{0},\left \vert x\left(  t\right)  \right \vert \right)   &
\leq \alpha_{1}\left(  t,\left \vert x\left(  t\right)  \right \vert \right) \\
&  \leq V\left(  t,x\left(  t\right)  \right) \\
&  \leq V\left(  t_{0},x\left(  t_{0}\right)  \right)  \phi \left(
t,t_{0}\right)  +\int_{t_{0}}^{t}\phi \left(  t,t_{0}\right)  \pi \left(
s\right)  \mathrm{d}s\\
&  =V\left(  t_{0},x\left(  t_{0}\right)  \right)  \phi \left(  t,t_{0}\right)
+\kappa \left(  t,t_{0}\right)  .
\end{align*}
Hence, by noting that $\lim_{t\rightarrow \infty}\phi \left(  t,t_{0}\right)
=0$ and (\ref{eq93}), we have $\lim_{t\rightarrow \infty}\alpha_{1}\left(
t_{0},\left \vert x\left(  t\right)  \right \vert \right)  =0,$ which in turn
implies $\lim_{t\rightarrow \infty}\left \vert x\left(  t\right)  \right \vert
=0.$ Hence the system is globally attractive. \ On the other hand, as
$\kappa \left(  t,t_{0}\right)  $ and $\phi \left(  t,t_{0}\right)  $ are
bounded, $\alpha_{1}\left(  t_{0},\left \vert x\left(  t\right)  \right \vert
\right)  $ is bounded, which in turn implies that $\left \vert x\left(
t\right)  \right \vert $ is bounded, namely, the system is Lagrange stable.
Then, by Proposition 2.5 in \cite{kt03siam}, the system is Lyapunov stable.
Consequently, the nonlinear time-varying system (\ref{sys}) is globally
asymptotically stable. The proof is finished.
\end{proof}

The most important advantage of Theorem \ref{th3} is that the right hand side
of (\ref{eq91}) is not required to be negative for all time and, moreover, is
even allowed to have a drifting term that is non-negative for all time.

To test the conditions imposed on $\left(  \mu \left(  t\right)  ,\pi \left(
t\right)  \right)  $ in Theorem \ref{th3}, we assume that there exists a
function $\varpi \left(  \cdot \right)  :[0,\infty)\rightarrow(0,\infty)$ such
that%
\begin{equation}
\phi \left(  t,s\right)  =\exp \left(  \int_{s}^{t}\mu \left(  \omega \right)
\mathrm{d}\omega \right)  \leq \varpi \left(  t-s\right)  ,\forall t\geq s\in J.
\label{eq78}%
\end{equation}
This class of function $\omega \left(  s\right)  $ was firstly introduced by
Kalman in \cite{malman60bcmm}, where the function is use to characterize the
uniformly complete controllability concept for LTV systems. Then
$\kappa \left(  t,t_{0}\right)  $ satisfies all the conditions in Theorem
\ref{th3}\ if the function%
\[
\varkappa \left(  t,t_{0}\right)  =\int_{t_{0}}^{t}\varpi \left(  t-s\right)
\pi \left(  s\right)  \mathrm{d}s,\;t\geq t_{0}\in J,
\]
is bounded and such that $\lim_{t\rightarrow \infty}\varkappa \left(
t,t_{0}\right)  =0.$ This can be tested by the following result which can be
regarded as a generalized Gelig lemma \cite{gly78book}. The proof of this
lemma is provided in Appendix A2.

\begin{lemma}
\label{lm3}Consider two functions $\varphi_{1}:[0,\infty)\rightarrow
\mathbf{R},$ $\varphi_{2}:J\rightarrow \mathbf{R}$ and denote $\varphi \left(
t,\tau \right)  =\int_{\tau}^{t}\varphi_{1}\left(  t-s\right)  \varphi
_{2}\left(  s\right)  \mathrm{d}s.$ Then, for any $t\geq \tau \in J,$
$\varphi \left(  t,\tau \right)  $ is uniformly bounded and the following
relation holds true%
\begin{equation}
\lim_{t\rightarrow \infty}\varphi \left(  t,\tau \right)  =\lim_{t\rightarrow
\infty}\int_{\tau}^{t}\varphi_{1}\left(  t-s\right)  \varphi_{2}\left(
s\right)  \mathrm{d}s=0, \label{eqnew0}%
\end{equation}
if one of the following three conditions holds true:

\begin{enumerate}
\item $\varphi_{1}\in \mathcal{L}_{p}\left(  [0,\infty)\right)  ,\varphi_{2}%
\in \mathcal{L}_{q}\left(  J\right)  ,$ where $p,q\in \left(  0,\infty \right)  $
are such that $\frac{1}{p}+\frac{1}{q}=1.$

\item $\varphi_{1}\in \mathcal{L}_{1}\left(  [0,\infty)\right)  ,$ and
$\varphi_{2}\left(  t\right)  \in \mathcal{Z}\left(  J\right)  $.

\item $\varphi_{1}\left(  t\right)  \in \mathcal{Z}\left(  [0,\infty)\right)
,$ and $\varphi_{2}\in \mathcal{L}_{1}\left(  J\right)  $.
\end{enumerate}
\end{lemma}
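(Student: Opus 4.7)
The plan is to treat the three cases uniformly by first performing the change of variables $u=t-s$ in the defining integral, which gives
$$\varphi(t,\tau)=\int_{0}^{t-\tau}\varphi_{1}(u)\varphi_{2}(t-u)\,\mathrm{d}u.$$
In this form both $\varphi_1$ (on its natural domain $[0,\infty)$) and $\varphi_2$ (evaluated at large times when $u$ is small) appear in a way that matches the three hypotheses. For each case I will establish uniform boundedness by a one-line Hölder or sup-norm estimate, and then prove the limit by splitting the $u$-integration at a cutoff $N$ so that one piece controls a tail of $\varphi_1$ and the other controls a tail of $\varphi_2$.

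For case 1, Hölder's inequality yields $|\varphi(t,\tau)|\le\|\varphi_1\|_p\|\varphi_2\|_q$ on the truncated ranges, which gives uniform boundedness. For the limit, given $\varepsilon>0$ I will pick $N$ with $\int_{N}^{\infty}|\varphi_1|^p<\varepsilon$ and $M$ with $\int_{M}^{\infty}|\varphi_2|^q<\varepsilon$, split the integral at $u=N$, and for $t>\tau+M+N$ estimate the $[0,N]$-piece by $\|\varphi_1\|_p(\int_{t-N}^{t}|\varphi_2|^q)^{1/q}\le\|\varphi_1\|_p\varepsilon^{1/q}$ and the $[N,t-\tau]$-piece by $\varepsilon^{1/p}\|\varphi_2\|_q$. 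Case 2 is similar with $L_1$ replacing $L_p$: uniform boundedness follows from $|\varphi(t,\tau)|\le\|\varphi_1\|_1\sup|\varphi_2|$, and for the limit I split at $N$, bound the far piece by $\sup|\varphi_2|\cdot\int_{N}^{\infty}|\varphi_1|$ (made $<\varepsilon/2$ by choice of $N$), and for the near piece use that $\varphi_2(t-u)\to0$ uniformly for $u\in[0,N]$ as $t\to\infty$. Case 3 is the mirror image: boundedness from $|\varphi(t,\tau)|\le\sup|\varphi_1|\cdot\|\varphi_2\|_1$, and for the limit split at $u=N$ so the piece with $u\ge N$ satisfies $|\varphi_1(u)|<\varepsilon$ and is bounded by $\varepsilon\|\varphi_2\|_1$, while the piece with $u<N$ (equivalently $s>t-N$) is bounded by $\sup|\varphi_1|\cdot\int_{t-N}^{\infty}|\varphi_2|\to 0$.

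The main obstacle I anticipate is minor but tedious: carefully tracking the ranges after the change of variables so that the cutoff $N$ is placed consistently on $\varphi_1$ and the corresponding tail on $\varphi_2$ sits in the correct region of $J$. Also, in case 3 one needs that a function in $\mathcal{Z}([0,\infty))\cap\mathbb{PC}$ is bounded, which I will note explicitly (piecewise continuity on $[0,T]$ combined with $\lim_{u\to\infty}\varphi_1(u)=0$ yields $\sup|\varphi_1|<\infty$). Otherwise each case reduces to a standard cutoff argument and the three estimates together with the uniform bounds will yield both conclusions of the lemma.
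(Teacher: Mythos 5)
Your proposal is correct and follows essentially the same route as the paper: Hölder (or sup-norm) estimates for uniform boundedness, then a splitting of the convolution integral at a cutoff so that one piece carries a small tail of $\varphi_{1}$ and the other a small tail (or vanishing supremum) of $\varphi_{2}$; your upfront change of variables $u=t-s$ is only a cosmetic reorganization of the paper's argument. The one difference is that you sketch all three cases explicitly (including the local-boundedness remark needed for the $\mathcal{Z}$ hypotheses), whereas the paper proves only Item 1 and asserts that Items 2--3 are analogous.
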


Notice that the inequality in (\ref{eq78}) is satisfied if $\mu \left(
t\right)  $ is uniformly exponentially stable. In this case $\varpi \left(
s\right)  =\exp \left(  -\alpha s\right)  \in \mathcal{L}_{p}\left(
[0,\infty)\right)  ,p\in \lbrack1,\infty)$ and, by Lemma \ref{lm3}, $\left(
\mu \left(  t\right)  ,\pi \left(  t\right)  \right)  $ satisfies the conditions
in Theorem \ref{th3} if the function $\pi \left(  t\right)  $ satisfies either
$\pi \left(  t\right)  \in \mathcal{Z}\left(  J\right)  $ or $\pi \left(
t\right)  \in \mathcal{L}_{q}\left(  J\right)  ,q\in \lbrack1,\infty).$ This
generalizes Theorem 3.1 in \cite{kt03siam} where $\mu \left(  t\right)  =-1$
and $\pi \left(  t\right)  \in \mathcal{Z}\left(  J\right)  \cap \mathcal{L}%
_{1}\left(  J\right)  .$

\begin{remark}
\label{rm1}In this remark we point out that there is no need to assume that
$\mu \left(  t\right)  $ is uniformly exponentially stable in Theorem
\ref{th2}. Let $J=[0,\infty),\mu \left(  t\right)  =\frac{-2t}{1+t^{2}}$ and
$\pi \left(  t\right)  =\frac{1}{1+t^{2}}\in \mathcal{Z}\left(  J\right)
\cap \mathcal{L}_{1}\left(  J\right)  .$ It follows from $\phi \left(
t,s\right)  =\frac{1+s^{2}}{1+t^{2}}\ $that $\mu \left(  t\right)  $ is not
(uniformly) exponentially stable. However, it follows from $\kappa \left(
t,t_{0}\right)  =\frac{t-t_{0}}{1+t^{2}}$ that the conditions in Theorem
\ref{th3} are satisfied with this pair of $\left(  \mu \left(  t\right)
,\pi \left(  t\right)  \right)  $.
\end{remark}

\subsection{Input-to-State Stability Analysis}

The concept of ISS was introduced by E. D. Sontag in the later 1980's
\cite{sontag89tac} and has been served as a fundamental tool in the analysis
and design of nonlinear systems, such as observer design, small gain theorem,
and stability test of connected nonlinear systems
\cite{kt03siam,mm05auto,nhwls12scl,sontag98scl}. In the literature, the ISS
property is frequently characterized by the ISS-Lypunov function
\cite{sontag98scl}. As usual, the time-derivative of the ISS-Lypunov function
is required to be negative definite under some additional condition on the
input signal $u.$ In this subsection, we will show how to utilize the idea in
the above subsection to deal with ISS stability analysis of nonlinear
time-varying systems by allowing indefinite time-derivatives for the
ISS-Lyapunov functions.

\begin{definition}
The nonlinear system (\ref{sys}) is said to be

\begin{enumerate}
\item input-to-state stable (IIS) if there exist $\sigma \in \mathcal{KL}$ and
$\gamma_{1}\in \mathcal{K}$ such that, for any $u\in$ $\mathcal{L}_{\infty}%
^{m},$ (see Eq. (5') in \cite{sontag98scl})%
\[
\left \vert x\left(  t\right)  \right \vert \leq \sigma \left(  \left \vert
x\left(  t_{0}\right)  \right \vert ,t-t_{0}\right)  +\gamma_{1}\left(
\left \Vert u\right \Vert _{\left[  t_{0},t\right]  }\right)  ,\; \forall t\geq
t_{0}\in J.
\]

\item integral input-to-state stable (iIIS) if there exist $\sigma
\in \mathcal{KL}$ and $\gamma_{1},\gamma_{2}\in \mathcal{K}$ such that (see Eq.
(7) in \cite{asw00tac})%
\[
\left \vert x\left(  t\right)  \right \vert \leq \sigma \left(  \left \vert
x\left(  t_{0}\right)  \right \vert ,t-t_{0}\right)  +\gamma_{1}\left(
\int_{t_{0}}^{t}\gamma_{2}\left(  \left \vert u\left(  s\right)  \right \vert
\right)  \mathrm{d}s\right)  ,\; \forall t\geq t_{0}\in J.
\]
\end{enumerate}
\end{definition}

We first present the following result regarding the characterization of IIS.

\begin{theorem}
\label{th4}Assume that there exist a $\mathbf{C}^{1}$ function $V:J\times
\mathbf{R}^{n}\rightarrow \lbrack0,\infty)$, two $\mathcal{K}_{\infty}$
functions $\alpha_{i},i=1,2,$ a $\mathcal{K}$ function $\rho$, and a uniformly
exponentially stable function $\mu \left(  t\right)  \in \mathbb{PC}\left(
J,\mathbf{R}\right)  $ such that, for all $\left(  t,x\right)  \in
J\times \mathbf{R}^{n},$%
\begin{align}
\alpha_{1}\left(  \left \vert x\right \vert \right)   &  \leq V\left(
t,x\right)  \leq \alpha_{2}\left(  \left \vert x\right \vert \right)
,\label{eq1}\\
\left.  \dot{V}\left(  t,x\right)  \right \vert _{(\ref{sys})}  &  \leq
\mu \left(  t\right)  V\left(  t,x\right)  \text{ if }V\left(  t,x\left(
t\right)  \right)  \geq \rho \left(  \left \vert u\left(  t\right)  \right \vert
\right)  . \label{zbzb4}%
\end{align}
Then the nonlinear system (\ref{sys}) is IIS.
\end{theorem}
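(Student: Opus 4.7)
The plan is to reduce Theorem \ref{th4} to an unconditional estimate of the form $V(t,x(t))\le V(t_0,x(t_0))\phi(t,t_0)+C\,\rho(\|u\|_{[t_0,t]})$ along any solution, and then invert through $\alpha_1,\alpha_2$ to obtain the IIS bound. Write $W(t)\triangleq V(t,x(t))$ and set $h(t)\triangleq\rho(\|u\|_{[t_0,t]})$; since $\rho\in\mathcal{K}$ and $\|u\|_{[t_0,t]}$ is non-decreasing in $t$, so is $h$, and trivially $\rho(|u(s)|)\le h(s)$ for every $s\in[t_0,t]$. By Lemma \ref{lm0}(3) applied to the uniformly exponentially stable $\mu$, there exist constants $\alpha>0$ and $\beta\ge 0$, both independent of $s,t_0$, such that $\phi(t,s)\le C\exp(-\alpha(t-s))$ for all $t\ge s$, where $C\triangleq\exp(\beta)$.

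The main step is a standard ISS case split. Case~1: if $W(s)>h(s)$ for every $s\in[t_0,t]$, then $W(s)\ge h(s)\ge\rho(|u(s)|)$, so hypothesis \eqref{zbzb4} gives $\dot{W}(s)\le\mu(s)W(s)$ a.e., and integration (as in \eqref{zbzb1}) yields $W(t)\le W(t_0)\phi(t,t_0)$. Case~2: otherwise the set $\{s\in[t_0,t]:W(s)\le h(s)\}$ is non-empty, and I let $\tau$ be its supremum; absolute continuity of $W$ (together with right-continuity of $h$ inherited from the truncated sup) gives $W(\tau)\le h(\tau)$. On $(\tau,t]$ one again has $W>h\ge\rho(|u|)$, so $\dot{W}\le\mu W$ there, whence
\[
W(t)\le W(\tau)\phi(t,\tau)\le h(\tau)\,C\le C\,h(t)
\]
by monotonicity of $h$ and the uniform bound $\phi(t,\tau)\le C$. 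In either case one obtains $W(t)\le W(t_0)\phi(t,t_0)+C\,h(t)$.

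Converting this into the ISS estimate is then routine. Combining $\alpha_1(|x(t)|)\le W(t)$, $W(t_0)\le\alpha_2(|x(t_0)|)$, the exponential bound on $\phi$, and the elementary $\mathcal{K}_\infty$ inequality $\alpha_1^{-1}(a+b)\le\alpha_1^{-1}(2a)+\alpha_1^{-1}(2b)$, I arrive at
\[
|x(t)|\le \sigma\bigl(|x(t_0)|,\,t-t_0\bigr)+\gamma_1\bigl(\|u\|_{[t_0,t]}\bigr),
\]
with $\sigma(r,s)\triangleq\alpha_1^{-1}(2C\alpha_2(r)\exp(-\alpha s))\in\mathcal{KL}$ and $\gamma_1(r)\triangleq\alpha_1^{-1}(2C\rho(r))\in\mathcal{K}$, which is exactly the IIS estimate.

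The principal obstacle is the conditional nature of \eqref{zbzb4}: one cannot simply integrate $\dot{V}\le\mu V$ on $[t_0,t]$ because this inequality is only guaranteed on the super-level set $\{V\ge\rho(|u|)\}$. The supremum-type definition of $\tau$, combined with the monotonicity of $h$, is the device that converts this conditional estimate into a global one; moreover, this step relies crucially on the \emph{uniform} bound $\phi(t,\tau)\le C$ with $C$ independent of $\tau$, which explains why uniform (rather than merely non-uniform) exponential stability of $\mu$ is imposed in the hypotheses.
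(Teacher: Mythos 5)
Your proof is correct and follows essentially the same route as the paper's: the same case split on whether the super-level condition $V\geq\rho(|u|)$ holds throughout $[t_0,t]$, the same supremum device ($\tau$ playing the role of the paper's $t^{\ast}$, with the subcases $\tau=t$ and $\tau<t$ handled together), and the same final inversion via $\alpha_1^{-1}(a+b)\leq\alpha_1^{-1}(2a)+\alpha_1^{-1}(2b)$. Your only variation is comparing $W$ against the monotone envelope $h(s)=\rho(\|u\|_{[t_0,s]})$ rather than against $\rho(|u(s)|)$ itself, which if anything makes the boundary step $W(\tau)\leq h(\tau)$ slightly cleaner to justify (by left-approximation and monotonicity of $h$) given that $u$ is only locally essentially bounded.
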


\begin{proof}
Let us consider the inequality
\begin{equation}
V\left(  s,x\left(  s\right)  \right)  \geq \rho \left(  \left \vert u\left(
s\right)  \right \vert \right)  . \label{zbzb3}%
\end{equation}
If (\ref{zbzb3}) is true for almost all $s\in \left[  t_{0},t\right]  \subset
J,$ then it follows from (\ref{zbzb4}) that%
\begin{equation}
V\left(  t,x\left(  t\right)  \right)  \leq V\left(  t_{0},x\left(
t_{0}\right)  \right)  \phi \left(  t,t_{0}\right)  \leq \alpha_{2}\left(
\left \vert x\left(  t_{0}\right)  \right \vert \right)  \mathrm{e}^{\beta
}\mathrm{e}^{-\alpha \left(  t-t_{0}\right)  }. \label{zbzb6}%
\end{equation}
Now we assume that (\ref{zbzb3}) does not hold true for almost all
$s\in \left[  t_{0},t\right]  \subset J.$ Let $\{s\in \left[  t_{0},t\right]
:V\left(  s,x\left(  s\right)  \right)  \leq \rho \left(  \left \vert u\left(
s\right)  \right \vert \right)  \}$ which is non-empty. Denote $t^{\ast}%
=\sup \{s\in \left[  t_{0},t\right]  :V\left(  s,x\left(  s\right)  \right)
\leq \rho \left(  \left \vert u\left(  s\right)  \right \vert \right)  \}.$ Then
we have either $t^{\ast}=t$ or $t^{\ast}<t.$ If $t^{\ast}=t,$ it follows from
the definition of $t^{\ast}$ that%
\begin{align}
V\left(  t,x\left(  t\right)  \right)   &  =V\left(  t^{\ast},x\left(
t^{\ast}\right)  \right)  \leq \rho \left(  \left \vert u\left(  t^{\ast}\right)
\right \vert \right) \nonumber \\
&  \leq \sup_{s\in \left[  t_{0},t\right]  }\left \{  \rho \left(  \left \vert
u\left(  s\right)  \right \vert \right)  \right \}  =\rho \left(  \left \Vert
u\left(  t\right)  \right \Vert _{\left[  t_{0},t\right]  }\right)  .
\label{zbzb7}%
\end{align}
If $t^{\ast}<t,$ then $V\left(  s,x\left(  s\right)  \right)  \geq \rho \left(
\left \vert u\left(  s\right)  \right \vert \right)  ,s\in \lbrack t^{\ast},t],$
which, by (\ref{zbzb4}), implies
\[
\left.  \dot{V}\left(  s,x\left(  s\right)  \right)  \right \vert
_{(\ref{sys})}\leq \mu \left(  s\right)  V\left(  s,x\left(  s\right)  \right)
,s\in \left[  t^{\ast},t\right]  ,
\]
from which it follows that%
\begin{align}
V\left(  t,x\left(  t\right)  \right)   &  \leq V\left(  t^{\ast},x\left(
t^{\ast}\right)  \right)  \phi \left(  t,t^{\ast}\right) \nonumber \\
&  =\rho \left(  \left \vert u\left(  t^{\ast}\right)  \right \vert \right)
\phi \left(  t,t^{\ast}\right) \nonumber \\
&  \leq \rho \left(  \left \Vert u\left(  t\right)  \right \Vert _{\left[
t_{0},t\right]  }\right)  \mathrm{e}^{\beta}. \label{zbzb8}%
\end{align}
Hence we get from (\ref{zbzb6}), (\ref{zbzb7}) and (\ref{zbzb8}) that%
\[
V\left(  t,x\left(  t\right)  \right)  \leq \alpha_{2}\left(  \left \vert
x\left(  t_{0}\right)  \right \vert \right)  \mathrm{e}^{\beta}\mathrm{e}%
^{-\alpha \left(  t-t_{0}\right)  }+\rho \left(  \left \Vert u\left(  t\right)
\right \Vert _{\left[  t_{0},t\right]  }\right)  \mathrm{e}^{\beta}.
\]
Hence, by using $\alpha \left(  a+b\right)  \leq \alpha \left(  2a\right)
+\alpha \left(  2b\right)  ,\alpha \in \mathcal{K},a\geq0,b\geq0$, we get%
\begin{align*}
\left \vert x\left(  t\right)  \right \vert  &  \leq \alpha_{1}^{-1}\left(
V\left(  t,x\left(  t\right)  \right)  \right) \\
&  \leq \alpha_{1}^{-1}\left(  2\mathrm{e}^{\beta}\alpha_{2}\left(  \left \vert
x\left(  t_{0}\right)  \right \vert \right)  \mathrm{e}^{-\alpha \left(
t-t_{0}\right)  }\right)  +\alpha_{1}^{-1}\left(  2\mathrm{e}^{\beta}%
\rho \left(  \left \Vert u\left(  t\right)  \right \Vert _{\left[  t_{0}%
,t\right]  }\right)  \right)  ,
\end{align*}
which shows that the system is IIS. The proof is finished.
\end{proof}

\begin{remark}
\label{rm4}Theorem \ref{th4} generalizes the results in \cite{sontag89tac} and
\cite{nhwls12scl}. Particularly, Theorem \ref{th4} improves Theorem 1 in
\cite{nhwls12scl}, where the corresponding function $\mu \left(  t\right)  $
needs to satisfy
\begin{equation}
\int_{t_{0}}^{\infty}\max \{ \mu \left(  s\right)  ,0\} \mathrm{d}s<\infty.
\label{eq123}%
\end{equation}
The above condition is quite restrictive since any piece-wise continuous
periodic function $\mu \left(  t\right)  $ satisfying (\ref{eq123}) if and only
if $\mu \left(  t\right)  \leq0,t\in J,$ namely, condition (\ref{zbzb4})
implies $\dot{V}\left(  t,x\right)  |_{(\ref{sys})}\leq0.$ In this case, by
Theorem 1 in \cite{mazenc03auto}, a new Lyapunov function with negative
definite time-derivative can be constructed instead.
\end{remark}

We next present the following result regarding the characterization of iIIS.

\begin{theorem}
\label{th2}Assume that there exist a $\mathbf{C}^{1}$ function $V:J\times
\mathbf{R}^{n}\rightarrow \lbrack0,\infty)$, two $\mathcal{K}_{\infty}$
functions $\alpha_{i},i=1,2,$ two $\mathcal{K}$ functions $\rho_{i},i=1,2$,
and a uniformly exponentially stable function $\mu \left(  t\right)
\in \mathbb{PC}\left(  J,\mathbf{R}\right)  $ such that, for all $\left(
t,x\right)  \in J\times \mathbf{R}^{n},$ (\ref{eq1}) and the following
inequality are satisfied%
\begin{equation}
\left.  \dot{V}\left(  t,x\right)  \right \vert _{(\ref{sys})}\leq \left(
\rho_{1}\left(  \left \vert u\right \vert \right)  +\mu \left(  t\right)
\right)  V\left(  t,x\right)  +\rho_{2}\left(  \left \vert u\right \vert
\right)  . \label{eq3}%
\end{equation}
Then the nonlinear time-varying system (\ref{sys}) is iIIS with $\gamma
_{1}=\alpha_{1}^{-1}\circ2\pi_{2},\gamma_{2}=\rho=\rho_{1}\vee \rho_{2},$ and
$\sigma \left(  s,t\right)  =\alpha_{1}^{-1}\left(  2\pi_{1}\left(  \alpha
_{2}\left(  s\right)  \mathrm{e}^{\beta}\mathrm{e}^{-\alpha t}\right)
\right)  $ where $\left(  \alpha,\beta \right)  $ is defined in Lemma \ref{lm0}
and%
\[
\pi_{1}\left(  s\right)  =s+\frac{1}{2}s^{2},\; \pi_{2}\left(  s\right)
=\frac{1}{2}\left(  \mathrm{e}^{s}-1\right)  ^{2}+s\mathrm{e}^{\beta s}.
\]

\end{theorem}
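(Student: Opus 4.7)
The plan is to apply the generalized Gronwall--Bellman inequality (Lemma \ref{lm2}) to (\ref{eq3}) with $\tilde\mu(t)=\mu(t)+\rho_{1}(|u(t)|)$ treated as the coefficient and $\rho_{2}(|u(t)|)$ as the forcing term. This yields
\[
V(t,x(t))\le V(t_{0},x(t_{0}))\,\tilde\phi(t,t_{0})+\int_{t_{0}}^{t}\tilde\phi(t,s)\,\rho_{2}(|u(s)|)\,\mathrm{d}s,
\]
where $\tilde\phi(t,s)=\phi(t,s)\exp\bigl(\int_{s}^{t}\rho_{1}(|u(\omega)|)\,\mathrm{d}\omega\bigr)$ and $\phi$ is the transition function for $\dot y=\mu y$. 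The uniform exponential stability of $\mu$ then lets me substitute $\phi(t,s)\le e^{\beta}e^{-\alpha(t-s)}$ via Lemma \ref{lm0}(3), and the pointwise domination $\rho_{1},\rho_{2}\le\rho=\rho_{1}\vee\rho_{2}$ lets me replace each $\rho_{i}$ by $\rho$. Writing $I(t)=\int_{t_{0}}^{t}\rho(|u(\omega)|)\,\mathrm{d}\omega$, I obtain a product-form bound
\[
V(t,x(t))\le \alpha_{2}(|x(t_{0})|)e^{\beta}e^{-\alpha(t-t_{0})}e^{I(t)}+e^{\beta}e^{I(t)}\int_{t_{0}}^{t}e^{-\alpha(t-s)}\rho(|u(s)|)\,\mathrm{d}s,
\]
in which the initial-condition factor and the $u$-dependence remain entangled through the common $e^{I(t)}$.

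The next step is to split this into a sum of a purely $(|x(t_{0})|,t-t_{0})$-dependent term and a purely $u$-dependent term, so that the shapes of $\pi_{1}$ and $\pi_{2}$ in the statement emerge. For the initial-condition part I use the Young-type identity $ae^{b}\le a+\tfrac{1}{2}a^{2}+\tfrac{1}{2}(e^{b}-1)^{2}=\pi_{1}(a)+\tfrac{1}{2}(e^{b}-1)^{2}$ with $a=\alpha_{2}(|x(t_{0})|)e^{\beta}e^{-\alpha(t-t_{0})}$ and $b=I(t)$; this releases a clean $\pi_{1}(\cdot)$ contribution together with the $\tfrac{1}{2}(e^{I(t)}-1)^{2}$ half of $\pi_{2}(I(t))$. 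For the residual integral, I bound it by $I(t)e^{\beta I(t)}$ using $e^{-\alpha(t-s)}\le 1$ and the identity $\rho(|u(s)|)e^{\int_{s}^{t}\rho(|u|)\mathrm{d}\omega}=-\tfrac{d}{ds}e^{\int_{s}^{t}\rho(|u|)\mathrm{d}\omega}$, which matches the $se^{\beta s}$ piece of $\pi_{2}$. Collecting everything produces the target sum-form estimate
\[
V(t,x(t))\le \pi_{1}\!\left(\alpha_{2}(|x(t_{0})|)e^{\beta}e^{-\alpha(t-t_{0})}\right)+\pi_{2}(I(t)).
\]

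The endgame is then immediate: combining $\alpha_{1}(|x(t)|)\le V(t,x(t))$ with the standard subadditivity $\alpha_{1}^{-1}(A+B)\le \alpha_{1}^{-1}(2A)+\alpha_{1}^{-1}(2B)$ for $\alpha_{1}^{-1}\in\mathcal{K}$ (already invoked in the proof of Theorem \ref{th4}) delivers the iIIS estimate with exactly $\sigma(|x(t_{0})|,t-t_{0})=\alpha_{1}^{-1}(2\pi_{1}(\alpha_{2}(|x(t_{0})|)e^{\beta}e^{-\alpha(t-t_{0})}))$, $\gamma_{1}=\alpha_{1}^{-1}\circ 2\pi_{2}$ and $\gamma_{2}=\rho$ as advertised. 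The main obstacle is the separation step: the raw Gronwall output couples $|x(t_{0})|$, $t-t_{0}$ and $I(t)$ multiplicatively, and landing on the precise algebraic form of $\pi_{2}$---in particular on the $I(t)e^{\beta I(t)}$ summand rather than a generic product of exponentials---requires choosing different splitting identities for the homogeneous and the forced parts of the Gronwall bound rather than one uniform inequality.
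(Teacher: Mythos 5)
Your overall route coincides with the paper's: apply Lemma \ref{lm2} to (\ref{eq3}) with coefficient $\mu(t)+\rho_{1}(|u(t)|)$, factor the resulting transition function, dominate $\rho_{1},\rho_{2}$ by $\rho=\rho_{1}\vee\rho_{2}$, split the homogeneous part with $a\mathrm{e}^{b}\le a+\tfrac{1}{2}a^{2}+\tfrac{1}{2}(\mathrm{e}^{b}-1)^{2}$, and finish with $\alpha_{1}^{-1}(A+B)\le\alpha_{1}^{-1}(2A)+\alpha_{1}^{-1}(2B)$. Your handling of the initial-condition term, and hence the $\sigma$ and the $\tfrac{1}{2}(\mathrm{e}^{s}-1)^{2}$ piece of $\pi_{2}$, match the paper exactly.

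The gap is in the forced term. From your own intermediate estimate the residual integral is at most $\mathrm{e}^{\beta}\mathrm{e}^{I(t)}\int_{t_{0}}^{t}\mathrm{e}^{-\alpha(t-s)}\rho(|u(s)|)\,\mathrm{d}s\le \mathrm{e}^{\beta}I(t)\mathrm{e}^{I(t)}$, or, if you instead use the exact integration identity you cite, $\mathrm{e}^{\beta}(\mathrm{e}^{I(t)}-1)$. Neither quantity is bounded by $I(t)\mathrm{e}^{\beta I(t)}$: as $I(t)\to 0^{+}$ both candidates behave like $\mathrm{e}^{\beta}I(t)$ while $I(t)\mathrm{e}^{\beta I(t)}$ behaves like $I(t)$, so the claimed domination fails for every $\beta\ge 0$ (the required inequality $\beta+I\le\beta I$ is false for small $I$). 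Consequently you do not land on the advertised $\pi_{2}(s)=\tfrac{1}{2}(\mathrm{e}^{s}-1)^{2}+s\mathrm{e}^{\beta s}$; what your estimates actually deliver is $\pi_{2}(s)=\tfrac{1}{2}(\mathrm{e}^{s}-1)^{2}+\mathrm{e}^{\beta}s\mathrm{e}^{s}$, which is still a $\mathcal{K}$ function of $s$, so the iISS conclusion survives with a modified $\gamma_{1}$, but the theorem's stated comparison functions are not obtained. For context, the paper manufactures the $s\mathrm{e}^{\beta s}$ summand by replacing $\exp(\int_{s}^{t}\mu)\exp(\int_{s}^{t}\rho_{1})$ with $\exp(\beta\int_{s}^{t}\rho_{1})$, an exchange you (reasonably) do not reproduce and which itself needs justification, since $\mathrm{e}^{\beta}\mathrm{e}^{c}\le\mathrm{e}^{\beta c}$ also fails for small $c$. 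Either accept the modified $\pi_{2}$ or supply an argument for that exchange; as written, the final inequality of your second paragraph does not follow from the steps you invoke.
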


\begin{proof}
By the differential form of the Gronwall-Bellman inequality in Lemma \ref{lm2}
one gets from (\ref{eq3}) that%
\begin{align}
V\left(  t,x\left(  t\right)  \right)  \leq &  V\left(  t_{0},x\left(
t_{0}\right)  \right)  \exp \left(  \int_{t_{0}}^{t}\left(  \rho_{1}\left(
\left \vert u\left(  s\right)  \right \vert \right)  +\mu \left(  s\right)
\right)  \mathrm{d}s\right) \nonumber \\
&  +\int_{t_{0}}^{t}\exp \left(  \int_{s}^{t}\left(  \rho_{1}\left(  \left \vert
u\left(  \lambda \right)  \right \vert \right)  +\mu \left(  \lambda \right)
\right)  \mathrm{d}\lambda \right)  \rho_{2}\left(  \left \vert u\left(
s\right)  \right \vert \right)  \mathrm{d}s\nonumber \\
\leq &  V\left(  t_{0},x\left(  t_{0}\right)  \right)  \phi \left(
t,t_{0}\right)  \exp \left(  \int_{t_{0}}^{t}\rho_{1}\left(  \left \vert
u\left(  s\right)  \right \vert \right)  \mathrm{d}s\right) \nonumber \\
&  +\int_{t_{0}}^{t}\exp \left(  \beta \int_{s}^{t}\rho_{1}\left(  \left \vert
u\left(  \lambda \right)  \right \vert \right)  \mathrm{d}\lambda \right)
\rho_{2}\left(  \left \vert u\left(  s\right)  \right \vert \right)
\mathrm{d}s\nonumber \\
\leq &  V\left(  t_{0},x\left(  t_{0}\right)  \right)  \phi \left(
t,t_{0}\right)  \exp \left(  \int_{t_{0}}^{t}\rho \left(  \left \vert u\left(
s\right)  \right \vert \right)  \mathrm{d}s\right) \nonumber \\
&  +\exp \left(  \beta \int_{t_{0}}^{t}\rho \left(  \left \vert u\left(
\lambda \right)  \right \vert \right)  \mathrm{d}\lambda \right)  \int_{t_{0}%
}^{t}\rho \left(  \left \vert u\left(  s\right)  \right \vert \right)
\mathrm{d}s,\; \forall t\geq t_{0}\in J, \label{eq4}%
\end{align}
where we have noticed that $\int_{t_{0}}^{t}\mu \left(  s\right)
\mathrm{d}s\leq \beta,t\geq t_{0},$ and $\int_{s}^{t}\rho_{1}\left(  \left \vert
u\left(  \lambda \right)  \right \vert \right)  \mathrm{d}\lambda \leq \int
_{t_{0}}^{t}\rho_{1}\left(  \left \vert u\left(  \lambda \right)  \right \vert
\right)  \mathrm{d}\lambda,s\in \lbrack t_{0},t].$ Letting $a=V\left(
t_{0},x\left(  t_{0}\right)  \right)  \phi \left(  t,t_{0}\right)
,b=\int_{t_{0}}^{t}\rho \left(  \left \vert u\left(  s\right)  \right \vert
\right)  \mathrm{d}s$ and using the inequality \cite{sontag98scl}
\[
a\mathrm{e}^{b}=a+a\left(  \mathrm{e}^{b}-1\right)  \leq a+\frac{1}{2}%
a^{2}+\frac{1}{2}\left(  \mathrm{e}^{b}-1\right)  ^{2},
\]
to give%
\begin{equation}
V\left(  t,x\left(  t\right)  \right)  \leq \pi_{1}\left(  a\right)  +\pi
_{2}\left(  b\right)  ,\; \forall t\geq t_{0}\in J. \label{eq4a}%
\end{equation}
By using (\ref{eq1}) we further get
\begin{align*}
\pi_{1}\left(  a\right)   &  \leq \pi_{1}\left(  \alpha_{2}\left(  \left \vert
x\left(  t_{0}\right)  \right \vert \right)  \exp \left(  -\alpha \left(
t-t_{0}\right)  +\beta \right)  \right) \\
&  =\pi_{1}\left(  \alpha_{2}\left(  \left \vert x\left(  t_{0}\right)
\right \vert \right)  \mathrm{e}^{\beta}\mathrm{e}^{-\alpha \left(
t-t_{0}\right)  }\right)  ,\; \forall t\geq t_{0}\in J.
\end{align*}
Hence we can obtain from (\ref{eq4a}) the following%
\begin{align*}
\left \vert x\left(  t\right)  \right \vert \leq &  \alpha_{1}^{-1}\left(
V\left(  t,x\left(  t\right)  \right)  \right) \\
\leq &  \alpha_{1}^{-1}\left(  \pi_{1}\left(  a\right)  +\pi_{2}\left(
b\right)  \right) \\
\leq &  \alpha_{1}^{-1}\left(  2\pi_{1}\left(  a\right)  \right)  +\alpha
_{1}^{-1}\left(  2\pi_{2}\left(  b\right)  \right) \\
\leq &  \alpha_{1}^{-1}\left(  2\pi_{1}\left(  \alpha_{2}\left(  \left \vert
x\left(  t_{0}\right)  \right \vert \right)  \mathrm{e}^{\beta}\mathrm{e}%
^{-\alpha \left(  t-t_{0}\right)  }\right)  \right)  +\alpha_{1}^{-1}\left(
2\pi_{2}\left(  \int_{t_{0}}^{t}\rho \left(  \left \vert u\left(  s\right)
\right \vert \right)  \mathrm{d}s\right)  \right)  ,\; \forall t\geq t_{0}\in
J,
\end{align*}
which is the desired result. The proof is finished.
\end{proof}

Letting $\rho_{1}=0$ and $\rho_{2}=\rho$ in Theorem \ref{th2} gives the
following corollary.

\begin{corollary}
\label{coro1}Assume that there exist a $\mathbf{C}^{1}$ function
$V:J\times \mathbf{R}^{n}\rightarrow \lbrack0,\infty)$, two $\mathcal{K}%
_{\infty}$ functions $\alpha_{i},i=1,2,$ a function $\rho \in \mathcal{K}$, and
a uniformly exponentially stable function $\mu \left(  t\right)  \in
\mathbb{PC}\left(  J,\mathbf{R}\right)  $ such that, for all $\left(
t,x\right)  \in J\times \mathbf{R}^{n},$ (\ref{eq1}) and the following
inequality%
\[
\left.  \dot{V}\left(  t,x\right)  \right \vert _{(\ref{sys})}\leq \mu \left(
t\right)  V\left(  t,x\right)  +\rho \left(  \left \vert u\right \vert \right)  ,
\]
are satisfied. Then the nonlinear time-varying system (\ref{sys}) is iIIS with
$\gamma_{1}=\alpha_{1}^{-1}\circ2\pi_{2},\gamma_{2}=\rho,$ and $\sigma \left(
s,t\right)  =\alpha_{1}^{-1}\left(  2\pi_{1}\left(  \alpha_{2}\left(
s\right)  \mathrm{e}^{\beta}\mathrm{e}^{-\alpha t}\right)  \right)  $ where
$\left(  \alpha,\beta \right)  $ is defined in Lemma \ref{lm0} and%
\[
\pi_{1}\left(  s\right)  =s,\; \pi_{2}\left(  s\right)  =s\mathrm{e}^{\beta s}.
\]

\end{corollary}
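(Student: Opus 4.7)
\textbf{Plan for the proof of Corollary \ref{coro1}.} The corollary is precisely the specialization of Theorem \ref{th2} to $\rho_{1}\equiv 0$ and $\rho_{2}=\rho$, so the strategy is to re-run the proof of Theorem \ref{th2} while observing which of its intermediate bounds are no longer needed. Under this choice $\rho_{1}\vee \rho_{2}=\rho$, which delivers $\gamma_{2}=\rho$ for free; what remains is to verify the simplified forms of $\pi_{1}$, $\pi_{2}$, $\gamma_{1}$, and $\sigma$.

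First I would apply Lemma \ref{lm2} to the hypothesis $\dot{V}|_{(\ref{sys})}\leq \mu(t)V(t,x)+\rho(|u|)$ to obtain
\[
V(t,x(t))\leq V(t_{0},x(t_{0}))\,\phi(t,t_{0})+\int_{t_{0}}^{t}\phi(t,s)\,\rho(|u(s)|)\,\mathrm{d}s,\quad \forall t\geq t_{0}\in J.
\]
Setting $a=V(t_{0},x(t_{0}))\phi(t,t_{0})$ and $b=\int_{t_{0}}^{t}\rho(|u(s)|)\,\mathrm{d}s$, the right-hand side of (\ref{eq4}) with $\rho_{1}\equiv 0$ collapses to $a+b\,\mathrm{e}^{\beta b}$, because the factor $\exp(\int_{t_{0}}^{t}\rho_{1}(|u|)\mathrm{d}s)$ multiplying $a$ reduces to $1$; in particular the splitting $a\mathrm{e}^{b}\leq a+\tfrac{1}{2}a^{2}+\tfrac{1}{2}(\mathrm{e}^{b}-1)^{2}$ used in Theorem \ref{th2} is unnecessary. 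This gives the inequality $V(t,x(t))\leq \pi_{1}(a)+\pi_{2}(b)$ with the claimed $\pi_{1}(s)=s$ and $\pi_{2}(s)=s\,\mathrm{e}^{\beta s}$.

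From here the argument is routine bookkeeping. The upper bound in (\ref{eq1}) combined with uniform exponential stability of $\mu$ (Lemma \ref{lm0} item 3) gives $a\leq \alpha_{2}(|x(t_{0})|)\,\mathrm{e}^{\beta}\mathrm{e}^{-\alpha(t-t_{0})}$; then inverting through $\alpha_{1}$ and using the $\mathcal{K}$-function subadditivity $\alpha_{1}^{-1}(p+q)\leq \alpha_{1}^{-1}(2p)+\alpha_{1}^{-1}(2q)$ splits the estimate into a pure decay term and a pure input term, yielding
\[
|x(t)|\leq \alpha_{1}^{-1}\!\left(2\pi_{1}(\alpha_{2}(|x(t_{0})|)\mathrm{e}^{\beta}\mathrm{e}^{-\alpha(t-t_{0})})\right)+\alpha_{1}^{-1}\!\left(2\pi_{2}\!\left(\int_{t_{0}}^{t}\rho(|u(s)|)\,\mathrm{d}s\right)\right),
\]
which is precisely the iIIS inequality with the stated $\sigma$, $\gamma_{1}$, $\gamma_{2}$.

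I do not expect any real obstacle: every ingredient is already in Theorem \ref{th2}. The only mild subtlety is that the $\pi_{1},\pi_{2}$ advertised in the corollary are tighter than what a naïve substitution $\rho_{1}=0$ into the Theorem \ref{th2} \emph{formulas} for $\pi_{1},\pi_{2}$ would produce; consequently the proof must be presented as a tailored re-derivation of (\ref{eq4})--(\ref{eq4a}) in the $\rho_{1}\equiv 0$ case, not as a verbatim quotation of the final line of Theorem \ref{th2}'s conclusion.
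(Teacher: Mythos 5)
Your proposal is correct and takes essentially the same route as the paper: the paper obtains Corollary \ref{coro1} simply by ``letting $\rho_{1}=0$ and $\rho_{2}=\rho$ in Theorem \ref{th2}'' with no further argument, while you additionally (and rightly) note that the advertised tighter gains $\pi_{1}(s)=s$ and $\pi_{2}(s)=s\mathrm{e}^{\beta s}$ come from re-running the estimates (\ref{eq4})--(\ref{eq4a}) with $\rho_{1}\equiv0$ rather than from substituting into the theorem's final formulas. No gap.
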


\begin{remark}
Similar to Remark \ref{rm4}, we mention that Theorem \ref{th2} improves
Theorem 2 in \cite{sontag98scl} and Theorem 3 in \cite{nhwls12scl}, and
Corollary \ref{coro1} improves Theorem 3 in \cite{sontag98scl} and Theorem 2
in \cite{nhwls12scl}, in the sense that the function $\mu \left(  t\right)  $
can take both negative and positive values, and does not need to satisfy the
restrictive condition (\ref{eq123}).
\end{remark}

\section{\label{sec4}Some Illustrative Examples}

In this section, we provide several numerical examples to demonstrate
effectiveness the proposed stability theorems.

\textbf{Example 1}. Consider the following nonlinear time-varying system%
\begin{equation}
\dot{x}(t)=-\frac{x}{t+\sin x},\;t\in J=(1,\infty). \label{ex1}%
\end{equation}
This system has been considered in Example 5.1 in \cite{liao09book}. Choose
$V\left(  x\right)  =x^{2}.$ Then we can compute%
\[
\dot{V}\left(  x\right)  =-\frac{2V}{t+\sin x}\leq-\frac{2V}{t+1}=\mu \left(
t\right)  V\left(  x\right)  ,
\]
where $\mu \left(  t\right)  =-\frac{2}{t+1}.$ It is easy to see that
$\mu \left(  t\right)  $ is an asymptotically stable function and, by Item 1 of
Theorem \ref{th1}, the nonlinear time-varying system (\ref{ex1}) is globally
asymptotically stable. In fact, it follows from $\int_{t_{0}}^{t}\mu \left(
s\right)  \mathrm{d}s=2\ln \frac{1+t_{0}}{1+t}$ that%
\[
\left \vert x\left(  t\right)  \right \vert =V^{\frac{1}{2}}\left(  x\right)
\leq \left \vert x\left(  t_{0}\right)  \right \vert \exp \left(  \frac{1}{2}%
\int_{t_{0}}^{t}\mu \left(  s\right)  \mathrm{d}s\right)  =\left \vert x\left(
t_{0}\right)  \right \vert \frac{1+t_{0}}{1+t},\; \forall t\geq t_{0}\in J.
\]

\textbf{Example 2}: Consider the following planar nonlinear time-varying
system%
\begin{equation}
\left \{
\begin{array}
[c]{c}%
\dot{x}_{1}=-\frac{1}{1+t}x_{1}+t^{2}x_{2}^{2k-1}-tx_{1}^{2r-1},\\
\dot{x}_{2}=-\frac{1}{1+t}x_{2}-t^{2}x_{1}^{2k-1}-tx_{2}^{2r-1},
\end{array}
\right.  \label{ex2}%
\end{equation}
where $t\in J=[0,\infty),$ and $k\geq1,r\geq1$ are two given integers. This
example is a slight modification of Example 5.2 in \cite{liao09book}. We
choose a time-varying Lyapunov function%
\[
V\left(  t,x\right)  =\left(  x_{1}^{2k}+x_{2}^{2k}\right)  \left(
1+t\right)  .
\]
Then the time-derivative of $V\left(  t,x\right)  $ can be evaluated as%
\begin{align*}
\dot{V}\left(  t,x\right)   &  =x_{1}^{2k}+x_{2}^{2k}+2k\left(  1+t\right)
\left(  x_{1}^{2k-1}\dot{x}_{1}+x_{2}^{2k-1}\dot{x}_{2}\right) \\
&  =x_{1}^{2k}+x_{2}^{2k}+2k\left(  1+t\right)  \left(  x_{1}^{2k-1}\left(
-\frac{1}{1+t}x_{1}+t^{2}x_{2}^{2k-1}-tx_{1}^{2r-1}\right)  \right) \\
&  \quad+2k\left(  1+t\right)  x_{2}^{2k-1}\left(  -\frac{1}{1+t}x_{2}%
-t^{2}x_{1}^{2k-1}-tx_{2}^{2r-1}\right) \\
&  =x_{1}^{2k}+x_{2}^{2k}-2k\left(  x_{1}^{2k}+x_{2}^{2k}\right)  -2k\left(
1+t\right)  t\left[  x_{1}^{2\left(  k+r-1\right)  }+x_{2}^{2\left(
k+r-1\right)  }\right] \\
&  \leq \left(  1-2k\right)  \left(  x_{1}^{2k}+x_{2}^{2k}\right) \\
&  =-\frac{2k-1}{1+t}V\left(  t,x\right)  .
\end{align*}
Hence all the conditions in Theorem \ref{th1} are satisfied for any integer
$k\geq1$ and thus this system is globally asymptotically stable. Moreover, by
using the inequality $\left(  a+b\right)  ^{p}\leq2^{p-1}\left(  a^{p}%
+b^{p}\right)  $ where $a\geq0,b\geq0$ and $p\geq1$ is an integer, we obtain%
\begin{align*}
\left(  x_{1}^{2}+x_{2}^{2}\right)  ^{k}  &  \leq2^{k-1}\left(  x_{1}%
^{2k}+x_{2}^{2k}\right) \\
&  \leq2^{k-1}V\left(  t,x\left(  t\right)  \right) \\
&  \leq2^{k-1}V\left(  t_{0},x\left(  t_{0}\right)  \right)  \left(
\frac{1+t_{0}}{1+t}\right)  ^{2k-1}\\
&  =2^{k-1}\left(  x_{1}^{2k}\left(  t_{0}\right)  +x_{2}^{2k}\left(
t_{0}\right)  \right)  \left(  1+t_{0}\right)  \left(  \frac{1+t_{0}}%
{1+t}\right)  ^{2k-1}\\
&  \leq2^{k-1}\left(  x_{1}^{2}\left(  t_{0}\right)  +x_{2}^{2}\left(
t_{0}\right)  \right)  ^{k}\frac{\left(  1+t_{0}\right)  ^{2k}}{\left(
1+t\right)  ^{2k-1}},
\end{align*}
from which it follows that%
\[
\left \vert x\left(  t\right)  \right \vert \leq2^{\frac{k-1}{2k}}\left \vert
x\left(  t_{0}\right)  \right \vert \frac{1+t_{0}}{\left(  1+t\right)
^{1-\frac{1}{2k}}},\; \forall t\geq t_{0}\in J,
\]
which gives an estimate of the decay rate of the system.

\textbf{Example 3}: Consider the following nonlinear time-varying system%
\begin{equation}
\dot{x}\left(  t\right)  =-\frac{1+t}{1+t^{2}}x\left(  t\right)  +\frac
{1}{1+t^{2}}\sin \left(  h\left(  x\left(  t\right)  \right)  \right)  ,\;t\in
J=[0,\infty), \label{sys4}%
\end{equation}
where $h\left(  x\right)  :\mathbf{R}\rightarrow \mathbf{R}$ is any locally
Lipschitz continuous function and $h\left(  0\right)  =0$. Consider the
Lyapunov function $V=x^{2}.$ Then%
\begin{align*}
\dot{V}\left(  x\right)   &  =-2\frac{1+t}{1+t^{2}}V\left(  x\right)
+\frac{2}{1+t^{2}}x\sin^{2}\left(  h\left(  x\left(  t\right)  \right)
\right) \\
&  \leq-2\frac{1+t}{1+t^{2}}V\left(  x\right)  +\frac{2}{1+t^{2}}%
x^{2}+2\left(  \frac{1}{1+t^{2}}\right)  ^{2}\frac{\sin^{2}\left(  h\left(
x\left(  t\right)  \right)  \right)  }{\frac{1}{1+t^{2}}}\\
&  \leq-2\frac{1+t}{1+t^{2}}V\left(  x\right)  +\frac{2}{1+t^{2}}V\left(
x\right)  +\frac{2}{1+t^{2}}\\
&  \leq-\frac{2t}{1+t^{2}}V\left(  x\right)  +\frac{2}{1+t^{2}},
\end{align*}
which corresponds to (\ref{eq91}) with $\mu \left(  t\right)  =-\frac
{2t}{1+t^{2}}$ and $\pi \left(  t\right)  =\frac{2}{1+t^{2}}.$ By Remark
\ref{rm1}, this pair of $\left(  \mu \left(  t\right)  ,\pi \left(  t\right)
\right)  $ satisfies all the conditions in Theorem \ref{th3} from which we
conclude that this nonlinear time-varying system is globally asymptotically stable.

\textbf{Example 4}: Consider the following scalar nonlinear time-varying
system%
\begin{equation}
\dot{x}(t)=\left(  \frac{1}{1+t+x^{2}}-t\left \vert \cos t\right \vert \right)
x+\frac{2t\cos \left \vert t\right \vert }{1+x^{2}}u,\;t\in J=[0,\infty).
\label{sys3}%
\end{equation}
This system is different from system (43) in \cite{nhwls12scl} where $\frac
{1}{1+t+x^{2}}$ is replaced by $\frac{1}{1+t^{2}+x^{2}}.$ Let $V\left(
x\right)  =\frac{1}{2}x^{2}.$ Then%
\begin{align}
\dot{V}\left(  x\right)   &  =2\left(  \frac{1}{1+t+x^{2}}-t\left \vert \cos
t\right \vert \right)  V\left(  x\right)  +\frac{2t\left \vert \cos t\right \vert
x}{1+x^{2}}u\nonumber \\
&  \leq2\left(  \frac{1}{1+t}-t\left \vert \cos t\right \vert \right)  V\left(
x\right)  +t\left \vert \cos t\right \vert \left \vert u\right \vert .
\label{eq56}%
\end{align}
Since $\int_{t_{0}}^{\infty}\frac{2\mathrm{d}s}{1+s}=\infty,$ the inequality
(44) in \cite{nhwls12scl} is not satisfied for this modified system and the
approach there in is not directly applicable.

In the following we show that how our result can apply to this system.
Consider the following scalar function
\begin{equation}
\mu \left(  t\right)  =\frac{2}{1+t}-t\left \vert \cos t\right \vert ,\;t\in
J=[0,\infty). \label{eq57}%
\end{equation}
We can show that $\mu \left(  t\right)  $ is uniformly exponentially stable,
namely, it follows from Lemma \ref{lm0} that there exist two positive numbers
$\alpha>0\ $and $\beta>0$ such that%
\begin{equation}
\int_{t_{0}}^{t}\mu \left(  s\right)  \mathrm{d}s\leq-\alpha \left(
t-t_{0}\right)  +\beta,\; \forall t\geq t_{0}\in J. \label{eq58}%
\end{equation}
The proof of the above inequality has been moved to Appendix A3 for clarity.
If $u=0,$ we get from (\ref{eq56}) that $\dot{V}\left(  x\right)  \leq
\mu \left(  t\right)  V\left(  x\right)  ,$ which, by Theorem \ref{th1},
implies that the system is globally uniformly exponentially stable. Moreover,
according to (\ref{eq59}) in Appendix A3, we have%
\[
\left \vert x\left(  t\right)  \right \vert \leq \exp \left(  \ln \left(
1+\frac{3}{2}\pi \right)  +1\right)  \exp \left(  -\frac{2}{3\pi}\left(
t-t_{0}\right)  \right)  \left \vert x\left(  t_{0}\right)  \right \vert ,\;
\forall t\geq t_{0}\in J.
\]
Notice that only asymptotic stability was claimed in \cite{nhwls12scl}. If
$u\leq V\left(  x\right)  ,$ we can also obtain from (\ref{eq56}) that
$\dot{V}\left(  x\right)  \leq \mu \left(  t\right)  V\left(  x\right)  ,$
which, by Theorem \ref{th4}, implies that the system is ISS.

\section{\label{sec5}Conclusion}

This paper has studied stability analysis of nonlinear time-varying systems by
using Lyapunov's second method. Differently from the traditional Lyapunov
approach, the proposed stability theorem does not require that the
time-derivative of the Lyapunov function is negative definite. The stability
analysis is achieved with the help of the comparison principle and the concept
of scalar stable functions. A couple of stability concepts were considered.
These concepts include asymptotic stability, uniformly asymptotic stability,
exponential stability, uniformly exponential stability, input-to-state
stability and integral input-to-state stability. The developed theorems
improves the existing results and their effectiveness were illustrated by some
numerical examples.

\section*{Appendix}

\subsection*{A1: Proof of Lemma \ref{lm2}}

We write (\ref{eq90}) as $\dot{y}\left(  t\right)  -\mu \left(  t\right)
y\left(  t\right)  \leq \pi \left(  t\right)  ,t\in J,$ by using which we can
obtain%
\begin{align*}
&  \frac{\mathrm{d}}{\mathrm{d}\lambda}\left(  y\left(  \lambda \right)
\exp \left(  -\int_{s}^{\lambda}\mu \left(  \omega \right)  \mathrm{d}%
\omega \right)  \right) \\
&  =\left(  \dot{y}\left(  \lambda \right)  -\mu \left(  \lambda \right)
y\left(  \lambda \right)  \right)  \exp \left(  -\int_{s}^{\lambda}\mu \left(
\omega \right)  \mathrm{d}\omega \right) \\
&  \leq \pi \left(  \lambda \right)  \exp \left(  -\int_{s}^{\lambda}\mu \left(
\omega \right)  \mathrm{d}\omega \right)  ,\; \lambda \geq s\in J,
\end{align*}
from which it follows that, for all $t\geq s\in J$%
\begin{align*}
y\left(  t\right)  \exp \left(  -\int_{s}^{t}\mu \left(  \omega \right)
\mathrm{d}\omega \right)  -y\left(  s\right)   &  =\int_{s}^{t}\mathrm{d}%
\left(  y\left(  \lambda \right)  \exp \left(  -\int_{s}^{\lambda}\mu \left(
\omega \right)  \mathrm{d}\omega \right)  \right) \\
&  \leq \int_{s}^{t}\pi \left(  \lambda \right)  \exp \left(  -\int_{s}^{\lambda
}\mu \left(  \omega \right)  \mathrm{d}\omega \right)  \mathrm{d}\lambda.
\end{align*}
As $\exp(-\int_{s}^{t}\mu \left(  \omega \right)  $\textrm{$d$}$\omega)>0,$ the
above inequality can be simplified as%
\begin{align*}
y\left(  t\right)   &  \leq \left(  y\left(  s\right)  +\int_{s}^{t}\pi \left(
\lambda \right)  \exp \left(  -\int_{s}^{\lambda}\mu \left(  \omega \right)
\mathrm{d}\omega \right)  \mathrm{d}\lambda \right)  \exp \left(  \int_{s}^{t}%
\mu \left(  \omega \right)  \mathrm{d}\omega \right) \\
&  =y\left(  s\right)  \exp \left(  \int_{s}^{t}\mu \left(  \omega \right)
\mathrm{d}\omega \right)  +\int_{s}^{t}\pi \left(  \lambda \right)  \exp \left(
\int_{\lambda}^{t}\mu \left(  \omega \right)  \mathrm{d}\omega \right)
\mathrm{d}\lambda,
\end{align*}
which is just (\ref{eqtemp3}).

\subsection*{A2: Proof of Lemma \ref{lm3}}

We only prove Item 1 since Items 2-3 can be proven in quite a similar way.
Since $\varphi_{1}\in \mathcal{L}_{p}\left(  [0,\infty)\right)  ,\varphi_{2}%
\in \mathcal{L}_{q}\left(  J\right)  ,$ there exist two positive constants
$d_{i},i=1,2$ such that%
\begin{equation}
\left(  \int_{0}^{\infty}\left \vert \varphi_{1}\left(  s\right)  \right \vert
^{p}\mathrm{d}s\right)  ^{\frac{1}{p}}\leq d_{1},\; \left(  \int_{t^{\#}%
}^{\infty}\left \vert \varphi_{2}\left(  s\right)  \right \vert ^{q}%
\mathrm{d}s\right)  ^{\frac{1}{q}}\leq d_{2}. \label{eqnew1}%
\end{equation}
Then, for any $t\geq \tau \in J,$ by the Holder inequality, we obtain
\begin{align*}
\left \vert \varphi \left(  t,\tau \right)  \right \vert  &  =\left \vert
\int_{\tau}^{t}\varphi_{1}\left(  t-s\right)  \varphi_{2}\left(  s\right)
\mathrm{d}s\right \vert \\
&  \leq \left(  \int_{\tau}^{t}\left \vert \varphi_{1}\left(  t-s\right)
\right \vert ^{p}\mathrm{d}s\right)  ^{\frac{1}{p}}\left(  \int_{\tau}%
^{t}\left \vert \varphi_{2}\left(  s\right)  \right \vert ^{q}\mathrm{d}%
s\right)  ^{\frac{1}{q}}\\
&  \leq \left(  \int_{0}^{t-\tau}\left \vert \varphi_{1}\left(  s\right)
\right \vert ^{p}\mathrm{d}s\right)  ^{\frac{1}{p}}\left(  \int_{t^{\#}}%
^{t}\left \vert \varphi_{2}\left(  s\right)  \right \vert ^{q}\mathrm{d}%
s\right)  ^{\frac{1}{q}}\\
&  \leq \left(  \int_{0}^{\infty}\left \vert \varphi_{1}\left(  s\right)
\right \vert ^{p}\mathrm{d}s\right)  ^{\frac{1}{p}}\left(  \int_{t^{\#}%
}^{\infty}\left \vert \varphi_{2}\left(  s\right)  \right \vert ^{q}%
\mathrm{d}s\right)  ^{\frac{1}{q}}\\
&  =d_{1}d_{2},
\end{align*}
which shows that $\left \vert \int_{\tau}^{t}\varphi_{1}\left(  t-s\right)
\varphi_{2}\left(  s\right)  \mathrm{d}s\right \vert $ is uniformly bounded for
any $t\geq \tau \in J.$

By the Cauchy convergence theorem, for any $\varepsilon>0,$ it follows from
(\ref{eqnew1}) that there exist $T_{i}=T_{i}\left(  \varepsilon \right)  >0$
such that, for any $t_{12}\geq t_{11}\geq T_{1}\in \lbrack0,\infty)$ and
$t_{22}\geq t_{21}\geq T_{2}\in J,$ there holds
\begin{equation}
\left(  \int_{t_{11}}^{t_{12}}\left \vert \varphi_{1}\left(  s\right)
\right \vert ^{p}\mathrm{d}s\right)  ^{\frac{1}{p}}\leq \varepsilon,\; \left(
\int_{t_{21}}^{t_{22}}\left \vert \varphi_{2}\left(  s\right)  \right \vert
^{q}\mathrm{d}s\right)  ^{\frac{1}{q}}\leq \varepsilon. \label{eqnew2}%
\end{equation}
Now, for any $t>\tau \in J,$ consider%
\[
\int_{\tau}^{t}\varphi_{1}\left(  t-s\right)  \varphi_{2}\left(  s\right)
\mathrm{d}s=\int_{\tau}^{T}\varphi_{1}\left(  t-s\right)  \varphi_{2}\left(
s\right)  \mathrm{d}s+\int_{T}^{t}\varphi_{1}\left(  t-s\right)  \varphi
_{2}\left(  s\right)  \mathrm{d}s,
\]
where $T\geq \tau$ is any constant to be specified. Then, for any $t\geq
T_{1}+T$ , by using the Holder inequality, we obtain%
\begin{align*}
\left \vert \int_{\tau}^{T}\varphi_{1}\left(  t-s\right)  \varphi_{2}\left(
s\right)  \mathrm{d}s\right \vert  &  \leq \left(  \int_{\tau}^{T}\left \vert
\varphi_{1}\left(  t-s\right)  \right \vert ^{p}\mathrm{d}s\right)  ^{\frac
{1}{p}}\left(  \int_{\tau}^{T}\left \vert \varphi_{2}\left(  s\right)
\right \vert ^{q}\mathrm{d}s\right)  ^{\frac{1}{q}}\\
&  =\left(  \int_{t-T}^{t-\tau}\left \vert \varphi_{1}\left(  s\right)
\right \vert ^{p}\mathrm{d}s\right)  ^{\frac{1}{p}}\left(  \int_{\tau}%
^{T}\left \vert \varphi_{2}\left(  s\right)  \right \vert ^{q}\mathrm{d}%
s\right)  ^{\frac{1}{q}}\\
&  \leq \left(  \int_{t-T}^{t-\tau}\left \vert \varphi_{1}\left(  s\right)
\right \vert ^{p}\mathrm{d}s\right)  ^{\frac{1}{p}}\left(  \int_{\tau}^{\infty
}\left \vert \varphi_{2}\left(  s\right)  \right \vert ^{q}\mathrm{d}s\right)
^{\frac{1}{q}}\\
&  \leq d_{2}\varepsilon,
\end{align*}
where we have noticed that $t-\tau \geq t-T\geq T_{1}.$ Similarly, if $T\geq
T_{2},$ then, for any $t\geq T\geq T_{2},$ we have
\begin{align*}
\left \vert \int_{T}^{t}\varphi_{1}\left(  t-s\right)  \varphi_{2}\left(
s\right)  \mathrm{d}s\right \vert  &  \leq \left(  \int_{T}^{t}\left \vert
\varphi_{1}\left(  t-s\right)  \right \vert ^{p}\mathrm{d}s\right)  ^{\frac
{1}{p}}\left(  \int_{T}^{t}\left \vert \varphi_{2}\left(  s\right)  \right \vert
^{q}\mathrm{d}s\right)  ^{\frac{1}{q}}\\
&  =\left(  \int_{0}^{t-T}\left \vert \varphi_{1}\left(  s\right)  \right \vert
^{p}\mathrm{d}s\right)  ^{\frac{1}{p}}\left(  \int_{T}^{t}\left \vert
\varphi_{2}\left(  s\right)  \right \vert ^{q}\mathrm{d}s\right)  ^{\frac{1}%
{q}}\\
&  \leq \left(  \int_{0}^{\infty}\left \vert \varphi_{1}\left(  s\right)
\right \vert ^{p}\mathrm{d}s\right)  ^{\frac{1}{p}}\left(  \int_{T}%
^{t}\left \vert \varphi_{2}\left(  s\right)  \right \vert ^{q}\mathrm{d}%
s\right)  ^{\frac{1}{q}}\\
&  \leq d_{1}\varepsilon.
\end{align*}
Combining these two cases by setting $T=\max \{T_{2},\tau \},$ we obtain, for
all $t\geq T_{1}+T_{2}+\tau,$%
\[
\left \vert \int_{\tau}^{t}\varphi_{1}\left(  t-s\right)  \varphi_{2}\left(
s\right)  \mathrm{d}s\right \vert \leq \left(  d_{1}+d_{2}\right)  \varepsilon,
\]
which just implies (\ref{eqnew0}). The proof is finished.

\subsection*{A3: Proof of Inequality (\ref{eq58})}

Notice that, for any $t\in J,$ we have
\begin{equation}
\int_{t}^{t+\frac{3}{2}\pi}\frac{2}{1+s}\mathrm{d}s=2\ln \frac{1+t+\frac{3}%
{2}\pi}{1+t}=2\ln \left(  1+\frac{\frac{3}{2}\pi}{1+t}\right)  \leq2\ln \left(
1+\frac{3}{2}\pi \right)  . \label{eq51}%
\end{equation}
Now consider three cases.

\begin{itemize}
\item Case 1: There exists a nonnegative integer $k$ such that $t\in
\lbrack2k\pi,2k\pi+1/2\pi].$ Then we can compute%
\begin{align}
\int_{t}^{t+\frac{3}{2}\pi}s\left \vert \cos s\right \vert \mathrm{d}s  &
\geq \int_{2k\pi+\frac{1}{2}\pi}^{2k\pi+\frac{3}{2}\pi}s\left \vert \cos
s\right \vert \mathrm{d}s\nonumber \\
&  =\int_{0}^{\pi}\left(  \sigma+\left(  2k\pi+\frac{1}{2}\pi \right)  \right)
\left \vert \cos \left(  \sigma+\left(  2k\pi+\frac{1}{2}\pi \right)  \right)
\right \vert \mathrm{d}\sigma \nonumber \\
&  =\int_{0}^{\pi}\left(  \sigma+\left(  2k\pi+\frac{1}{2}\pi \right)  \right)
\sin \left(  \sigma \right)  d\sigma \nonumber \\
&  =\left.  \left(  \sin(\sigma)-\sigma \cos \sigma \right)  \right \vert
_{0}^{\pi}-\left.  \left(  2k\pi+\frac{1}{2}\pi \right)  \cos \left(
\sigma \right)  \right \vert _{0}^{\pi}\nonumber \\
&  =\pi+2\left(  2k\pi+\frac{1}{2}\pi \right)  \geq2\pi. \label{eq52}%
\end{align}

\item Case 2: There exists a nonnegative integer $k$ such that $t\in
(2k\pi+1/2\pi,2k\pi+3/2\pi].$ Then%
\begin{align}
\int_{t}^{t+\frac{3}{2}\pi}s\left \vert \cos s\right \vert \mathrm{d}s  &
\geq \int_{2k\pi+\frac{3}{2}\pi}^{2k\pi+\frac{1}{2}\pi+\frac{3}{2}\pi
}s\left \vert \cos s\right \vert \mathrm{d}s\nonumber \\
&  =\int_{0}^{\frac{1}{2}\pi}\left(  \sigma+\left(  2k\pi+\frac{3}{2}%
\pi \right)  \right)  \left \vert \cos \left(  \sigma+\left(  2k\pi+\frac{3}%
{2}\pi \right)  \right)  \right \vert \mathrm{d}\sigma \nonumber \\
&  =\int_{0}^{\frac{1}{2}\pi}\left(  \sigma+\left(  2k\pi+\frac{3}{2}%
\pi \right)  \right)  \sin \left(  \sigma \right)  \mathrm{d}\sigma \nonumber \\
&  =\left.  \left(  \sin(\sigma)-\sigma \cos \sigma \right)  \right \vert
_{0}^{\frac{1}{2}\pi}-\left.  \left(  2k\pi+\frac{3}{2}\pi \right)  \cos \left(
\sigma \right)  \right \vert _{0}^{\frac{1}{2}\pi}\nonumber \\
&  =1+2k\pi+\frac{3}{2}\pi \geq1+\frac{3}{2}\pi. \label{eq53}%
\end{align}

\item Case 3: There exists a nonnegative integer $k$ such that $t\in
(2k\pi+3/2\pi,2k\pi+2\pi]$. Then%
\begin{align}
\int_{t}^{t+\frac{3}{2}\pi}s\left \vert \cos s\right \vert \mathrm{d}s  &
\geq \int_{2k\pi+2\pi}^{2k\pi+\frac{3}{2}\pi+\frac{3}{2}\pi}s\left \vert \cos
s\right \vert \mathrm{d}s\nonumber \\
&  =\int_{0}^{\pi}\left(  \sigma+\left(  2k\pi+2\pi \right)  \right)
\left \vert \cos \left(  \sigma+\left(  2k\pi+2\pi \right)  \right)  \right \vert
\mathrm{d}\sigma \nonumber \\
&  =\int_{0}^{\pi}\left(  \sigma+\left(  2k\pi+2\pi \right)  \right)
\left \vert \cos \left(  \sigma \right)  \right \vert \mathrm{d}\sigma \nonumber \\
&  \geq \int_{0}^{\frac{\pi}{2}}\left(  \sigma+\left(  2k\pi+2\pi \right)
\right)  \cos \left(  \sigma \right)  \mathrm{d}\sigma \nonumber \\
&  =\left.  \left(  \cos \sigma+\sigma \sin \sigma \right)  \right \vert
_{0}^{\frac{\pi}{2}}+\left.  \left(  2k\pi+2\pi \right)  \sin \left(
\sigma \right)  \right \vert _{0}^{\frac{\pi}{2}}\nonumber \\
&  =\frac{\pi}{2}-1+2k\pi+2\pi \nonumber \\
&  \geq2\pi. \label{eq54}%
\end{align}

\end{itemize}

It follows from (\ref{eq51})--(\ref{eq54}) that, for any $t\in J,$ we have%
\begin{align}
\int_{t}^{t+\frac{3}{2}\pi}\mu \left(  s\right)  \mathrm{d}s  &  =\int
_{t}^{t+\frac{3}{2}\pi}\frac{2\mathrm{d}s}{1+s}-\int_{t}^{t+\frac{3}{2}\pi
}s\left \vert \cos s\right \vert \mathrm{d}s\nonumber \\
&  \leq2\ln \left(  1+\frac{3}{2}\pi \right)  -\left(  1+\frac{3}{2}\pi \right)
\nonumber \\
&  <-2. \label{eq55}%
\end{align}
For any $t\geq t_{0}\in J,$ there exists a unique nonnegative integer $k$ such
that $t\in \lbrack t_{0}+k\frac{3}{2}\pi,t_{0}+\left(  k+1\right)  \frac{3}%
{2}\pi).$ Let $t_{i}=t_{0}+i\frac{3}{2}\pi.$ Then it follows from (\ref{eq55})
and (\ref{eq51}) that%
\begin{align}
\int_{t_{0}}^{t}\mu \left(  s\right)  \mathrm{d}s  &  =\sum \limits_{i=0}%
^{k-1}\int_{t_{0}+i\frac{3}{2}\pi}^{t_{0}+\left(  i+1\right)  \frac{3}{2}\pi
}\mu \left(  s\right)  \mathrm{d}s+\int_{t_{0}+k\frac{3}{2}\pi}^{t}\mu \left(
s\right)  \mathrm{d}s\nonumber \\
&  \leq-2k+\int_{t_{0}+k\frac{3}{2}\pi}^{t}\frac{2}{1+s}\mathrm{d}s\nonumber \\
&  \leq-2k+\int_{t_{0}+k\frac{3}{2}\pi}^{t_{0}+\left(  k+1\right)  \frac{3}%
{2}\pi}\frac{2}{1+s}\mathrm{d}s\nonumber \\
&  \leq-2k+2\ln \left(  1+\frac{3}{2}\pi \right) \nonumber \\
&  =-2\sum \limits_{i=0}^{k-1}\frac{t_{i+1}-t_{i}}{\frac{3}{2}\pi}+2\ln \left(
1+\frac{3}{2}\pi \right) \nonumber \\
&  =-\frac{4}{3\pi}\left(  t_{k}-t_{0}\right)  +2\ln \left(  1+\frac{3}{2}%
\pi \right) \nonumber \\
&  =-\frac{4}{3\pi}\left(  t-t_{0}\right)  +2\ln \left(  1+\frac{3}{2}%
\pi \right)  +\frac{4}{3\pi}\left(  t-t_{k}\right) \nonumber \\
&  \leq-\frac{4}{3\pi}\left(  t-t_{0}\right)  +2\ln \left(  1+\frac{3}{2}%
\pi \right)  +2, \label{eq59}%
\end{align}
which is just (\ref{eq58}), namely, $\mu \left(  t\right)  $ is uniformly
exponentially stable.

\bigskip

\end{document}